\newcommand{\googlebooks}[1]{(preview at \href{https://books.google.com/books?id=#1}{google books})}
\newcommand{\numdam}[1]{}
\def\altdb{\vadjust{\vbox to 0pt{\vss\hbox{\kern \hsize
\quad{\dbend}}\kern\baselineskip\kern-10pt}}}
\theoremstyle{plain}
\newtheorem{prop}{Proposition}[section]
\newtheorem{conj}[prop]{Conjecture}
\newtheorem{thm}[prop]{Theorem}
\newtheorem{lem}[prop]{Lemma}
\numberwithin{equation}{section}
\newtheorem{rem}[prop]{Remark}
\newtheorem{defn}[prop]{Definition}
\newcommand{\id}{\operatorname{id}}
\renewcommand{\Vec}{\operatorname{Vec}}
\newcommand{\cC}{{\mathcal{C}}}
\newcommand{\Inv}{{\operatorname{Inv}}}
\newcommand{\Id}{{\operatorname{Id}}}
\newcommand{\eqbr}{{\underline{\operatorname{EqBr}}}}
\newcommand{\Pic}{{\operatorname{Pic}}}
\newcommand{\op}{\operatorname{op}}
\newcommand{\mop}{\operatorname{mop}}
\title{Fusion rules for $\mathbb{Z}/2\mathbb{Z}$ permutation gauging}
\author[Cain Edie-Michell]{Cain Edie-Michell}
\email{cain.edie-michell@vanderbilt.edu}
\address{Department of Mathematics, Vanderbilt University, Nashville, Tennessee 37212, U.S.A}
\author[Corey Jones]{Corey Jones}
\email{cormjones88@gmail.com}
\address{Department of Mathematics, The Ohio State University, Columbus, Ohio 43210, U.S.A}
\author{Julia Yael Plavnik}
\email[Julia Yael Plavnik]{jplavnik@iu.edu}
\address{Department of Mathematics, Indiana University, Bloomington, Indiana 47405, U.S.A}
\date{\today}
\begin{document}
\begin{abstract} In this note, we examine the gauging of the $\mathbb{Z}/2\mathbb{Z}$ permutation action on the tensor square of a modular tensor category $\mathcal{C}$. When $\mathcal{C}$ is unpointed we provide formulas for the fusion rules of the gauged category, which non-trivially involves the modular data of $\mathcal{C}$. Our technique highlights the use of generalized Frobenius-Schur indicators. We discuss several examples related to quantum groups at roots of unity. 

\end{abstract}
\maketitle

\section{Introduction}

Gauging is an important concept in the study of 2 dimensional topological phases. It is a procedure for taking a theory with an on-site group of global symmetries and constructing a new theory where the symmetries act locally \cite{1410.4540}. From a categorical perspective, the original topological order is described by a unitary modular tensor category (UMTC), and the symmetry of the system provides a \textit{symmetry enriched topological order}, described by a G-crossed braided extension of $\cC$. The topological order of the gauged theory is described by the equivariantization of the G-crossed extension, and is again a UMTC.


From a purely mathematical point of view, understanding gauging in general is a difficult problem. Gauging is a two-step process. Starting from a categorical action of $G$ on a modular category $\cC$, the first step is to apply the G-crossed braided extension theory of braided fusion categories by following the general recipe of Etingof, Nikshych, and Ostrik \cite{MR2677836}. One begins by finding explicit invertible modules associated to braided auto-equivalences, and then finding appropriate module functors between their relative tensor products. There are two cohomological obstructions associated to these choices which may or may not vanish. If they vanish, then one can construct a $G$-graded extension of $\mathcal{C}$ which, while not braided, is $G$-crossed braided \cite{MR2609644}. In particular this extension carries a categorical action of $G$. For the second step we equivariantize by this $G$-action, to obtain a new non-degenerate braided fusion category. 

 From a mathematical point of view, simply determining whether or not an extension exists can be hard. Often one can establish the existence of an extension abstractly using cohomological arguments but in this situation understanding basic properties of the resulting extension(s), such as the fusion rules and the categorical $G$-action, can be a daunting task. Furthermore, in order to compute the fusion rules of the fully gauged category, it is necessary to understand the tensorators of the $G$-action, not just how it acts on objects. This information can be hard to determine without explicit structure functors and maps in hand. 

 A large class of obvious braided actions on modular tensor categories are the so-called \textit{permutation actions} construcuted from repeated Deligne products of a modular tensor category (see \cite[Definition 1.11.1]{MR3242743} for the definition of the Deligne product of categories). Given a modular category $\mathcal{C}$, the category $\mathcal{C}^{\boxtimes n}$ is also modular, and the symmetric group $S_{n}$ acts on $\cC^{\boxtimes n}$ via braided automorphisms by permuting the tensor factors. Restricting this action to any subgroup of $S_{n}$ is called a \textit{permutation action}. Since these actions always exist, one might expect to be able to say something about gauging them in general. In particular, can one always succeed in finding the G-extension for permutation actions? Can we determine fusion rules (or more generally the modular data) of the resulting gauged category?

 These questions are also related to rational conformal field theory. Given a conformal field theory $A$ (axiomatized by either a rational conformal net \cite{MR1838752} or a $C_{2}$-cofinite vertex operator algebra \cite{MR2140309}), its category of symmetries $\text{Rep}(A)$ is always a modular tensor category and Gannon conjectures that all modular tensor categories arise in this way. Given an action of $G$ by automorphisms on $A$, one can construct the orbifold theory $A^{G}$. In particular, taking the tensor product theory $A^{\otimes n}$ one always has an action by permutation automorphisms. The resulting orbifold is called the \textit{permutation orbifold theory}. In the conformal net axiomatization, M{\"u}ger has shown that $\text{Rep}(A^{G})$ is always a gauging of $\text{Rep}(A)$ by the induced action of $G$ \cite{MR2183964}. Thus the representation category of a permutation orbifold is a permutation gauging of the original category.

There has been some progress on these questions. In \cite{MR2806495}, the authors give a topological construction of the permutation extensions which are a-priori weak fusion categories. For the $\mathbb{Z}/2\mathbb{Z}$ case, the authors explicitly work out the associator and directly show that one obtains a fusion category. In \cite{MR3959559}, the authors show the obstructions vanish for permutation actions of arbitrary non-degenerate braided fusion categories. In both these cases it is very difficult to make any general statements about the structure of the resulting equivariantization. In this paper we study the case of $\mathbb{Z}/2\mathbb{Z}$ permutation actions on $\mathcal{C} \boxtimes \cC$. In the case where $\cC$ has no invertible objects, we give formulas for the fusion rules of the equivariantizations in \textbf{Theorem \ref{fusionequiv}}. We then apply this formula to several examples.

 Surprisingly our arguments for the structure of the extension, and formula for the fusion rules of the gauging rely in a nontrivial way on the spherical structure of the initial category $\mathcal{C}$. In other words, although permutation gauging makes sense for non-degenerate braided fusion categories, most of our formulas (particularly the formula for the fusion rules of the final gauging) only make sense if $\mathcal{C}$ is equipped with a spherical structure. Our formulas agree with the formulas in \cite{MR1606821}, \cite{MR1620424}, \cite{MR1992886} and \cite{MR2116735} obtained in the setting of conformal field theory.
 
 The structure of the paper is as follows. In section $2$ we review the basics of $G$-crossed braided extension theory for braided fusion categories. In section 3 we focus on the case of $\mathbb{Z}/2\mathbb{Z}$ permutation actions of $\mathcal{C}\boxtimes \mathcal{C}$ and give an explicit invertible module which corresponds to the swap action. In section 4 we describe the fusion rules for the extension and also the $\mathbb{Z}/2\mathbb{Z}$-categorical action. In section 5 we apply the formulas of \cite{MR3059899} to determine the fusion rules for the subsequent equivariantization. Finally, in section 6 we present some examples and conjecture on the result of permutation gauging for certain quantum group categories.

\medskip

\section{Preliminaries}

A \textit{fusion category} is a finite semisimple $\mathbb{C}$-linear rigid monoidal category with simple unit \cite{MR2183279}. There are many examples that arise throughout physics and mathematics (see \cite{MR2640343} for an overview). Of particular interest are \textit{braided fusion categories} \cite{MR2609644} which possess natural isomorphisms $\operatorname{br}_{V,W}: V\otimes W\rightarrow W\otimes V$ satisfying certain coherence conditions. 

The \textit{M{\"u}ger center} of a braided fusion category is the full subcategory generated by objects $V$ with $\operatorname{br}_{W,V}\circ \operatorname{br}_{V,W}=\operatorname{id}_{V\otimes W}$ for all $W\in \mathcal{C}$. A braided fusion category is called \textit{non-degenerate} if the M{\"u}ger center is trivial, \textit{i.e.} equivalent to $\Vec$ as a braided fusion category.

A non-degenerate braided fusion category $\cC$ which in addition is equipped with a \textit{spherical structure} (see \cite[Definition 4.7.14]{MR3242743}) is called \textit{modular} \cite{Muger-Oxford}. In this case, there are remarkable numerical invariants associated to the category, namely the S and T matrices, which have fascinating number theoretic properties and deep relationships to topological field theories \cite[Chapter 3]{MR1797619}. Modular categories also naturally appear in applications to conformal field theory since often in physics unitary assumptions are fundamental, which automatically imply the existence of spherical structures. It is an open question whether every fusion category admits a spherical structure and in particular whether every non-degenerate braided fusion category is modular. 

We wish to point out an unfortunate disconnect in the literature. The extension theory developed in \cite{MR2677836} makes references to neither spherical structures (or more fundamentally, pivotal structures), nor unitary structures. Thus starting with a spherical or unitary fusion category and applying the extension theory, there is no guarantee that the result will have a spherical structure or unitary structure respectively. This is widely believed to simply be a matter of no one having yet worked through the details. In particular, adding the word unitary in the appropriate places in the arguments of \cite{MR2677836} fixes the problem, while for spherical structures the question is a bit more subtle. Thus starting from a modular category and applying gauging, as we will do in the sequel, we produce an \emph{a-priori} non-degenerate braided fusion category, not necessarily a modular one. However if we assume our modular category is unitary (as a braided fusion category) and our braided auto-equivalences are $*$-functors, then as in \cite{MR3555361}, we will again obtain a unitary modular category.

We say an object $g$ in a fusion category is \textit{invertible} if we have that $g\otimes g^* \cong \mathbf{1}$. The isomorphism classes of these objects form a group, which we denote $\Inv(\cC)$.

\subsection*{The Picard group, $G$-crossed braided extensions, and gauging}

We briefly recall notions related to the $G$-crossed braided extension theory of braided fusion categories \cite{MR2677836}. 

Let $\mathcal{C}$ be a braided fusion category. Given a $\mathcal{C}$-module $\mathcal{M}$, we can equip it with the structure of a $\cC$-bimodule using the braiding as in \cite[Section 2.8]{MR3107567}. We say a module category is \textit{invertible} if it is invertible as a $\cC$-bimodule \cite[Section 3]{MR2677836}.

The Picard tri-category $\underline{\underline{\operatorname{Pic}}}(\mathcal{C})$ of $\mathcal{C}$ is the tri-category with morphisms
\begin{itemize}
\item 0-morphism: The braided fusion category $\mathcal{C}$, 
\item 1-morphisms: Invertible $\mathcal{C}$-modules,
\item 2-morphisms: Module equivalences,
\item 3-morphisms: Module natural isomorphisms.
\end{itemize}

Composition of $1$-morphisms is given by the relative tensor product of bimodules $\boxtimes_\cC$, while the other compositions are the standard ones (see, for example \cite{MR2678824} for a detailed exposition). This tri-category can be truncated at the top level to form the monoidal category $\underline{\Pic}(\mathcal{C})$. The monoidal category $\underline{\Pic}(\mathcal{C})$ can itself be then truncated to give a group $\Pic(\mathcal{C})$, called the \textit{Picard group} of $\mathcal{C}$. An important application of $\underline{\underline{\Pic}}(\mathcal{C})$ and its associated truncations is in the classification of $G$-crossed braided extensions of $\mathcal{C}$.

A \textit{$G$-crossed braided extension} of $\mathcal{C}$ is a $G$-graded fusion category $\mathcal{D}=\bigoplus_{g\in G} \cC_{g}$ with $\cC_{e}=\cC$, equipped with a categorical $G$-action such that $g(\cC_{h})\subseteq \cC_{ghg^{-1}}$. We also have a \textit{G-braiding}, which is a family of natural isomorphisms 
\[ X \otimes Y \to g(Y)\otimes X,\]
for all $X\in \cC_{g}$, $Y\in \mathcal{D}$, satisfying certain coherence conditions (see \cite[Appendix 5]{MR2674592},\cite[Definition 4.41]{MR2609644}). Note that the categorical $G$-action restricts to a braided $G$-action on the trivial component $\cC$.

 It is shown in \cite[Theorem 7.12]{MR2677836} that for a given group $G$, the $G$-crossed braided extensions of $\mathcal{C}$ are classified by tri-functors $\underline{\underline{G}}\rightarrow \underline{\underline{\Pic}}(\cC)$. Such functors are described by triples $(C,M,A)$, where
\begin{enumerate}\label{obstructions}
\item $C:G \to \Pic(\mathcal{C})$ is a group homomorphism
\item $M$ is a collection of $\cC$-module equivalences \[M_{g,h}: C_g \boxtimes_\cC C_h \to C_{gh},\] such that we have natural isomorphisms of module functors $M_{fg,h}( M_{f,g} \boxtimes_\cC \Id_{C_h}) \cong M_{f,gh}( \Id_{C_f}\boxtimes_\cC M_{g,h})$.
\item $A$ is a specific choice of isomorphisms \[A_{f,g,h}: M_{fg,h}( M_{f,g} \boxtimes_\cC \Id_{C_h}) \to M_{f,gh}( \Id_{C_f}\boxtimes_\cC M_{g,h}),\] satisfying the pentagon equation.
\end{enumerate}

Extension theory \cite{MR2677836} is the process of finding such 3-dimensional data by starting from lower dimensional data and attempting to lift it to higher dimensional data. At each stage there is an obstruction to lifting, given by a certain group cohomology class. We briefly explain this process starting from the bottom up.

Suppose we have a homomorphism $C:G\rightarrow \Pic(\cC)$, there is an obstruction to lifting this to a monoidal functor $\underline{G}\rightarrow \underline{\Pic}(\cC)$, representented by a cohomology class $o_{3}(C)\in H^{3}(G, \Inv(\cC))$. To compute $o_{3}(C)$, one first chooses any collection of $\cC$-module equivalences $M=\{M_{g,h}: \cC_{g}\boxtimes_{\cC} \cC_{h}\cong \cC_{gh}\}_{g,h\in G}$. Then one constructs $T(C,M)\in Z^{3}(G, \Inv(\cC))$ \cite[Equation (53)]{MR2677836}, which measures how far the choices $M_{f,g}$ are from satisfying $M_{fg,h}( M_{f,g} \boxtimes_\cC \Id_{C_h}) \cong M_{f,gh}( \Id_{C_f}\boxtimes_\cC M_{g,h})$. Then one defines $o_{3}(C):=[T(C,M]\in H^{3}(G, \Inv(\cC))$, which only depends on $C$. There exists a lifting of $C$ if and only if $o_{3}(C)$ is trivial. Furthermore, if the obstruction vanishes, equivalence classes of liftings form a torsor over $H^{2}(G, \Inv(\cC))$.

Similarly, given a monoidal functor $(C,M):\underline{G}\rightarrow \underline{Pic}(\cC)$, there is an obstruction to lifting this to a tri-functor $\underline{\underline{G}}\rightarrow \underline{\underline{\Pic}}(\cC)$. For any choice of $\cC$-module functor natural isomorphisms $A=\{A_{f,g,h}: M_{fg,h}( M_{f,g} \boxtimes_\cC \Id_{C_h})\cong M_{f,gh}( \Id_{C_f}\boxtimes_\cC M_{g,h})\}_{f,g,h\in G}$ one defines $v(C,M,A)\in Z^{4}(G,\mathbb{C}^{\times})$ \cite[Equations (53) and (58)]{MR2677836}, which measures how far the collection $A_{f,g,h}$ are from satisfying the pentagon equations. The cohomology class $o_{4}(C,M):=[v(C,M,A)]$ only depends on the monoidal functor $(C,M)$. There exists a lifting of $(C,M)$ to a tri-functor if and only $o_{4}(C,M)$ vanishes. In this case, the possible liftings form a torsor over $H^{3}(G, \mathbb{C}^{\times})$. For details see \cite[Section 8]{MR2677836}.

Given a $G$-crossed braided extension of $\mathcal{C}$, one can equivariantize by the $G$-action to get a braided fusion category. When the $0$-graded component of the G-crossed braided category is modular (or more generally, non-degenerate), the result of equivariantizing also modular (non-degenerate), see \cite[Proposition 4.56]{MR2609644}. The process of starting with a modular category $\mathcal{C}$, constructing a $G$-crossed braided extension, and then equivariantizating to obtain a new modular category is known as \textit{gauging}. Gauging has been studied from the physics point of view in \cite{1410.4540}, where it corresponds to orbifolding. An orbifold of a conformal field theory is the new conformal field theory obtained by taking the fixed points under a group action. From the mathematical point of veiw, gauging has been studied in \cite{MR3555361}.

We define $\eqbr(\mathcal{C})$ to be the monoidal category whose objects are braided auto-equivalences of $\cC$, and whose morphisms are monoidal natural isomorphisms. It is shown in \cite[Theorem 5.2]{MR2677836} that for a non-degenerate category $\mathcal{C}$ , there is a monoidal equivalence 
\[\underline{\Pic}(\mathcal{C}) \to \eqbr(\mathcal{C}). \]
We briefly discuss one part of this equivalence which will be useful for us, namely how to construct a braided auto-equivalence of $\mathcal{C}$ from an invertible module. 

\begin{rem} When working in the $2$-category of categories, functors, and natural transformations, we use the $\circ$ notation to denote the horizontal composition of natural transformations (compatible with $\circ$ for composition of functors), while $\cdot $ denotes ordinary vertical composition of natural transformations.
\end{rem}

Given a braided fusion category $\mathcal{C}$ and an invertible module $\mathcal{M}$, we can construct two monoidal functors $\alpha^{\pm}_{\mathcal M}: \mathcal C \to \operatorname{Fun}_{\mathcal{C}}(\mathcal{M},\mathcal{M})$, which are
\[ m \mapsto X\triangleright m \]
However, the module functor structure maps differ for $\alpha^+(X)$ and $\alpha^-(X)$. They are, respectively, given by
\begin{align*}
L^+_{Y,m} &:= {l^{\mathcal{M}}_{X,Y,m}}^{-1}\operatorname{br}_{Y,X}l^{\mathcal{M}}_{Y,X,m}: Y\triangleright \alpha^+(X)[m] \to \alpha^+(X)[Y\triangleright m], \\ 
L^-_{Y,m} &:= {l^{\mathcal{M}}_{X,Y,m}}^{-1}\operatorname{br}^{-1}_{X,Y}l^{\mathcal{M}}_{Y,X,m}: Y\triangleright \alpha^-(X)[m] \to \alpha^-(X)[Y\triangleright m]. 
\end{align*}

Here, $l^{\mathcal{M}}_{Y,X,m}:Y\triangleright(X\triangle m)\rightarrow (Y\otimes X)\triangleright m$ are the module structure natural isomorphisms for $\mathcal{M}$. We use the module natural isomorphisms as the tensorators for both $\alpha^+$ and $\alpha^-$
\[l^{\mathcal{M}}_{X,Y,-} : \alpha^{\pm}(X) \circ \alpha^{\pm}(Y) \to \alpha^{\pm}(X\otimes Y) .\]

When $\mathcal{C}$ is non-degenerate, these monoidal functors are both equivalences, and the equivalence ${\alpha_{\mathcal{M}}^-}^{-1}\circ \alpha^+_{\mathcal{M}} : \cC \to \cC$ is braided.

\subsection*{Vanishing obstructions for permutation actions}

Recall that $\underline{G}$ is the monoidal category whose objects are elements of $G$ and morphisms are identities.

\begin{defn}
A \textit{braided categorical action} is a monoidal functor $\underline{G}\rightarrow \eqbr(\mathcal{C})$.
\end{defn}

When $\mathcal{C}$ is non-degenerate, composing a braided categorical action with the equivalence $\eqbr(\mathcal{C})\rightarrow \underline{\Pic}(\mathcal{C})$ gives a monoidal functor $\underline{G}\rightarrow \underline{\Pic}(\mathcal{C})$. This gives invertible $\mathcal{C}$-modules $\{C_{g}\}_{g\in G}$. The tensorator of this functor provides explicit choices of module equivalences $M_{f,g}:C_{f}\boxtimes_{\cC} C_{g}\rightarrow C_{fg}$, and the condition that this functor is monoidal implies that $T(C,M)$ is trivial.

A large class of braided categorical actions are given by \textit{permutation actions}, defined as follows:

\begin{defn} Let $\mathcal{C}$ be a braided fusion category and $G$ a subgroup of a permutation group $S_{n}$. The associated \textit{permutation action} is the braided categorical action of $G$ on the Deligne tensor power $\mathcal{C}^{\boxtimes n}$, where $G$ acts by permuting the tensor factors, and the tensorators are identities.

\end{defn}

Given a permutation action $G$ on $\cC$, it is natural to try construct a corresponding $G$-crossed braided extension of $\cC$. While the above discussion shows that we can make coherent choices of $C$ and $M$, it is not clear if we can make a coherent choice of $A$ to give a $G$-crossed braided category. It was shown in \cite{MR3959559} that it is always possible to make a coherent such choice of $A$, using involved cohomology arguments. To help keep this paper self contained we prove the following Lemma, which gives a straight forward proof of the existence, and classification of choices of $A$.

\begin{lem}\label{lem:existence} There are precisely $N$ distinct $\mathbb{Z}/N\mathbb{Z}$-crossed braided extensions of $\mathcal{C}^{\boxtimes N}$ corresponding to the cyclic permutation action, obtained from each other by twisting the associator with representatives of elements in $H^{3}(\mathbb{Z}/N\mathbb{Z}, \mathbb{C}^{\times})$, i.e the $\mathbb{Z}/N\mathbb{Z}$-crossed braided extensions of $\mathcal{C}^{\boxtimes N}$ form a torsor over $H^{3}(\mathbb{Z}/N\mathbb{Z}, \mathbb{C}^{\times})$.
\end{lem}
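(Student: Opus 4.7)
The plan is to run the obstruction-theoretic machinery recalled above, exploiting the fact that for a cyclic group of odd-degree cohomology periodicity with $\mathbb{C}^{\times}$ coefficients makes most of the work automatic. The starting input is that the cyclic permutation action on $\mathcal{C}^{\boxtimes N}$ is already a braided categorical action $\underline{\mathbb{Z}/N\mathbb{Z}} \to \eqbr(\mathcal{C}^{\boxtimes N})$ with identity tensorators. Since $\mathcal{C}$, and hence $\mathcal{C}^{\boxtimes N}$, is non-degenerate, composing with the equivalence $\eqbr(\mathcal{C}^{\boxtimes N}) \simeq \underline{\Pic}(\mathcal{C}^{\boxtimes N})$ produces a monoidal functor $(C,M) \colon \underline{\mathbb{Z}/N\mathbb{Z}} \to \underline{\Pic}(\mathcal{C}^{\boxtimes N})$. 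In particular the obstruction $o_3(C) \in H^3(\mathbb{Z}/N\mathbb{Z}, \Inv(\mathcal{C}^{\boxtimes N}))$ vanishes and we have a canonical $(C,M)$ already in hand, bypassing any $H^2$-ambiguity.

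The crucial step is to verify that the next obstruction $o_4(C,M) \in H^4(\mathbb{Z}/N\mathbb{Z}, \mathbb{C}^{\times})$ vanishes. Here the key observation is purely cohomological: for any cyclic group the standard periodic resolution gives
\[ H^{2k}(\mathbb{Z}/N\mathbb{Z}, \mathbb{C}^{\times}) \cong \mathbb{C}^{\times}/(\mathbb{C}^{\times})^{N} = 0 \qquad (k \geq 1), \]
because $\mathbb{C}^{\times}$ is divisible. Hence $o_4(C,M)=0$ automatically, so at least one tri-functor lift $\underline{\underline{\mathbb{Z}/N\mathbb{Z}}} \to \underline{\underline{\Pic}}(\mathcal{C}^{\boxtimes N})$ exists, producing at least one $\mathbb{Z}/N\mathbb{Z}$-crossed braided extension of $\mathcal{C}^{\boxtimes N}$.

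Once one extension exists, \cite[Theorem 7.12]{MR2677836} immediately gives that the set of tri-functor lifts of the fixed $(C,M)$ is a torsor over $H^3(\mathbb{Z}/N\mathbb{Z}, \mathbb{C}^{\times})$, where the action is exactly by twisting the natural isomorphisms $A_{f,g,h}$ by a 3-cocycle in $\mathbb{C}^{\times}$; this is visible directly from the definition of the cocycle $v(C,M,A)$ in \cite[Eq.~(58)]{MR2677836}. The same periodic resolution now evaluated in odd degree gives
\[ H^3(\mathbb{Z}/N\mathbb{Z}, \mathbb{C}^{\times}) \cong \mathbb{C}^{\times}[N] \cong \mathbb{Z}/N\mathbb{Z}, \]
a group of order $N$, yielding precisely $N$ extensions and the stated torsor structure.

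The only genuine content is the vanishing $H^4(\mathbb{Z}/N\mathbb{Z}, \mathbb{C}^{\times}) = 0$, which is standard and is exactly what fails for a general permutation subgroup $G \leq S_n$ and so forces the much more involved arguments of \cite{MR3959559} in that setting; in the cyclic case no real obstacle remains, which is why the present lemma admits a completely formal proof.
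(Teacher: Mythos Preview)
Your argument is correct and follows the same skeleton as the paper's: use the categorical action to get past $o_3$, invoke $H^4(\mathbb{Z}/N\mathbb{Z},\mathbb{C}^\times)=0$ to kill $o_4$, and then count via $H^3(\mathbb{Z}/N\mathbb{Z},\mathbb{C}^\times)\cong\mathbb{Z}/N\mathbb{Z}$.

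The one substantive difference is how uniqueness of the monoidal lift $(C,M)$ is handled. You argue that because the permutation action is already a braided \emph{categorical} action, composing with the monoidal equivalence $\eqbr\simeq\underline{\Pic}$ hands you a canonical $(C,M)$, so the $H^2$-torsor of lifts of $C$ never enters the count. The paper instead computes directly that $H^2(\mathbb{Z}/N\mathbb{Z},\Inv(\cC)^N)=0$ with the cyclic permutation action on coefficients, using the standard identification $H^2(\mathbb{Z}/N\mathbb{Z},M)=M^{\mathbb{Z}/N\mathbb{Z}}/D(M)$ and checking that every diagonal tuple $(X,\dots,X)$ is the norm of $(X,1,\dots,1)$. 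Your shortcut is legitimate under the natural reading of ``corresponding to the permutation action'' as fixing the categorical action (which, via the equivalence, determines $(C,M)$ up to isomorphism of monoidal functors). The paper's explicit $H^2$ computation buys a slightly stronger conclusion: even if one only fixes the underlying group homomorphism $C:\mathbb{Z}/N\mathbb{Z}\to\Pic(\cC^{\boxtimes N})$ and allows arbitrary monoidal lifts $M$, no additional extensions appear.
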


\begin{proof}
As the permutation action of $\mathbb{Z}/N\mathbb{Z}$ on $\cC^{\boxtimes N}$ is categorical, we have that there exists some collection of module equivalences $M$ such that the first obstruction to constructing a $\mathbb{Z}/N\mathbb{Z}$-crossed braided extension vanishes. As this obstruction vanishes, there exists some collection of module natural isomorphisms $A$. As the group $H^4(\mathbb{Z}/N\mathbb{Z}, \mathbb{C}^\times)$ is trivial, we can rescale the collection $A_{f,g,h}$ so that the second obstruction to constructing a $\mathbb{Z}/N\mathbb{Z}$-crossed braided extension vanishes too. Hence, there exists a $\mathbb{Z}/N\mathbb{Z}$-crossed braided extension of $\mathcal{C}^{\boxtimes N}$ corresponding to the cyclic permutation action.

In general from \cite[Section 8]{MR2677836} we know that equivalence classes of collections $M$ such that the obstruction $T(C,M)$ vanish form a torsor over $H^{2}(G, \Inv(\mathcal{C}))$. In our cyclic permutation case the relevant group is $H^{2}(\mathbb{Z}/N\mathbb{Z}, \Inv(\mathcal{C})^{N})$, where $\mathbb{Z}/N\mathbb{Z}$ acts on $\Inv(\mathcal{C})^{N}$ by cyclicly permuting the factors. It is well-known from the theory of group cohomology that $H^{2}(\mathbb{Z}/N\mathbb{Z}, M)=M^{\mathbb{Z}/N\mathbb{Z}}/D(M)$, where $D=\prod_{g\in\mathbb{Z}/N\mathbb{Z}} g\in End(M)$. In our case $(\Inv(\mathcal{C})^{N})^{\mathbb{Z}/N\mathbb{Z}}$ is exactly the diagonal elements in $\Inv(\mathcal{C})^{N}$. But every diagonal element $(X, \dots, X)\in \Inv(\mathcal{C})^{N}$ is of the form $D(X,1,\dots,1)$. Thus $H^{2}(\mathbb{Z}/N\mathbb{Z}, \Inv(\cC)^N) = \{e\}$. This implies that there is a unique collection of module equivalences $M_{f,g}$ giving rise to a $\mathbb{Z}/N\mathbb{Z}$-crossed braided extension.

It is a well known fact that $H^3( \mathbb{Z}/N\mathbb{Z}, \mathbb{C}^\times) = \mathbb{Z}/N\mathbb{Z}$. Thus there are $N$ different collections of module natural isomorphisms $A$ giving rise to $\mathbb{Z}/N\mathbb{Z}$-crossed braided extensions. 
\end{proof}

While the above categories are distinct as $G$-crossed braided extensions of $\mathcal{C}$ they may be equivalent as monoidal categories (see \cite{1711.00645} for details). 

While the above Lemma shows the abstract existence of these $\mathbb{Z}/N\mathbb{Z}$-crossed braided extensions, the proof is very much non-constructive. For the remainder of this paper we will restrict our attention to the $\mathbb{Z}/2\mathbb{Z}$-case and give an explicit construction of the $\mathbb{Z}/2\mathbb{Z}$-crossed braided extension. Furthermore, we compute the fusion rules for the equivariantization by the $\mathbb{Z}/2\mathbb{Z}$-action to complete the gauging process. While most of our proofs only hold in the case that our initial category has no non-trivial invertible objects, we conjecture that they are still valid in the general case.

\section{The invertible module coming from the swap action of $\mathcal{C}\boxtimes \mathcal{C}$}\label{sec:module}

Let $\cC$ be a modular tensor category. We will explicitly describe the invertible module category of $\cC \boxtimes \cC$ associated to the swap auto-equivalence. This module category was constructed and studied in \cite{MR2673719}, but we use a slightly different (but equivalent) version.

\begin{rem}
We will write $\sigma$ for the swap auto-equivalence of $\cC \boxtimes \cC$.
\end{rem}
 We define $\widehat{\cC}:=\cC$ as a linear category and we equip it with the structure of a left module category over $\cC\boxtimes \cC$ as follows.

	The action is given by 
\begin{equation*}
X \boxtimes Y \triangleright \widehat{M} := \widehat{X\otimes M \otimes Y},
\end{equation*}
with module structure morphisms
\begin{equation*}
l_{X\boxtimes Y, Z\boxtimes W,\widehat{M}}:= \id_{X\otimes Z \otimes M} \otimes \operatorname{br}_{W,Y} : X\boxtimes Y \triangleright Z\boxtimes W \triangleright \widehat{M} \to (X\otimes Z) \boxtimes (Y\otimes W)\triangleright \widehat{M},
\end{equation*}
which we draw graphically as
\[ 
 l_{X\boxtimes Y, Z\boxtimes W,\widehat{M}} =\raisebox{-.5\height}{ \includegraphics[scale = .3]{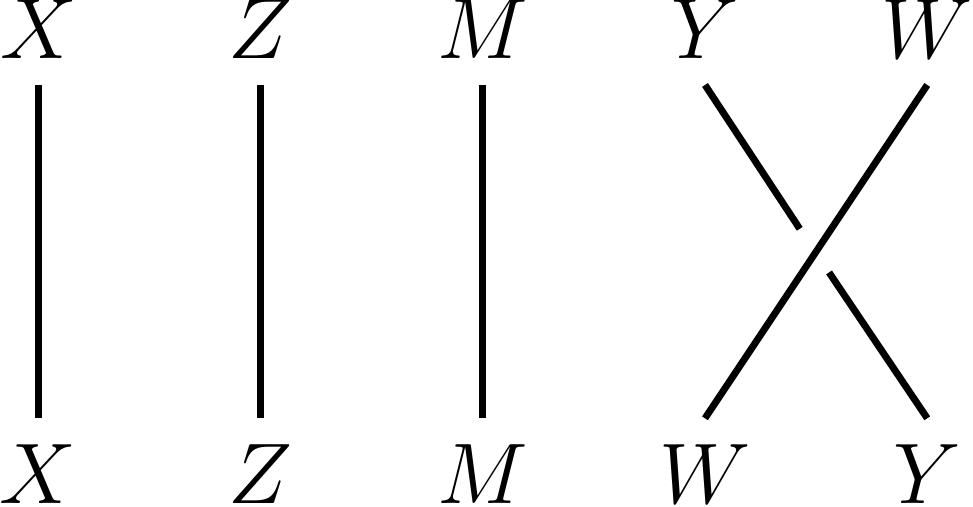}}.
\]

\begin{lem}\label{lem:mod}
The image of $\widehat{\cC}$ under the equivalence $\underline{\Pic}(\cC\boxtimes \cC) \to \eqbr(\cC \boxtimes \cC)$ is $\sigma$.
\end{lem}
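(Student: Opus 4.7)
The plan is to compute the monoidal functors $\alpha^{\pm}_{\widehat{\cC}}$ explicitly from the definitions given in the preliminaries, and then exhibit a monoidal natural isomorphism
\[
\alpha^{+}_{\widehat{\cC}} \Rightarrow \alpha^{-}_{\widehat{\cC}}\circ\sigma
\]
of monoidal functors $\cC\boxtimes\cC\to\operatorname{Fun}_{\cC\boxtimes\cC}(\widehat{\cC},\widehat{\cC})$. By the construction of the equivalence $\underline{\Pic}(\cC\boxtimes\cC)\to\eqbr(\cC\boxtimes\cC)$ recalled above, the existence of such a natural isomorphism is exactly the statement that ${\alpha^-_{\widehat{\cC}}}^{-1}\circ\alpha^+_{\widehat{\cC}}\cong\sigma$.

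A preliminary step is to check that $\widehat{\cC}$ really is a $\cC\boxtimes\cC$-module category, i.e.\ that the maps $l$ defined in the statement satisfy the pentagon axiom. Graphically, this reduces to a single hexagon relation for $\operatorname{br}$ in $\cC$ together with naturality, and is routine. Next, substituting the formula for $l$ into the definitions of $L^{\pm}$ from the preliminaries, three of the four braidings telescope, and one computes that the underlying functor of both $\alpha^+(X\boxtimes Y)$ and $\alpha^-(X\boxtimes Y)$ is $\widehat{M}\mapsto \widehat{X\otimes M\otimes Y}$, but with different module structure maps: for $\alpha^+(X\boxtimes Y)$, the map $L^+_{Z\boxtimes W,\widehat M}$ acts as $\operatorname{br}_{Z,X}$ on the left of $M$ and $\operatorname{br}_{Y,W}$ on the right, whereas for $\alpha^-(X\boxtimes Y)$ the corresponding map uses the inverse braidings $\operatorname{br}^{-1}_{X,Z}$ and $\operatorname{br}^{-1}_{W,Y}$ in the analogous slots.

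The main step, and the technical heart of the proof, is to produce the natural isomorphism. Since morphisms in the linear category $\widehat{\cC}$ are just morphisms in $\cC$, we must give for each $X\boxtimes Y$ a morphism
\[
\eta_{X\boxtimes Y,\widehat{M}}:\widehat{X\otimes M\otimes Y}\longrightarrow \widehat{Y\otimes M\otimes X},
\]
natural in $M$, built from the braiding of $\cC$ so as to swap $X$ and $Y$ across $M$. A natural candidate is the composite of three braidings which moves $X$ to the right of $M$ and $Y$ to the left, drawn graphically as crossing the $X$ and $Y$ strands over (or under) the $M$ strand in a consistent way. The hard part will be verifying, simultaneously, two compatibilities: (a) that $\eta_{X\boxtimes Y}$ is a morphism of $\cC\boxtimes\cC$-module functors, i.e.\ that it intertwines $L^+$ of $\alpha^+(X\boxtimes Y)$ with $L^-$ of $\alpha^-(Y\boxtimes X)$; and (b) that the family $\{\eta_{X\boxtimes Y}\}$ is monoidal with respect to the tensorators $l^{\widehat{\cC}}$ of $\alpha^\pm$ and the trivial tensorator of $\sigma$. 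Both compatibilities reduce to repeated applications of the hexagon axiom and naturality of the braiding, and are cleanest to verify in the graphical calculus used to define $l$. Once $\eta$ is established as a monoidal natural isomorphism of module functors, the claim follows immediately.
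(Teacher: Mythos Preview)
Your overall strategy matches the paper's exactly: compute $\alpha^{\pm}_{\widehat{\cC}}$ explicitly, then exhibit a monoidal natural isomorphism $\eta:\alpha^{+}_{\widehat{\cC}}\Rightarrow\alpha^{-}_{\widehat{\cC}}\circ\sigma$ and verify the two compatibilities (a) module-naturality and (b) monoidality. Your description of the underlying functors and of $L^{\pm}$ is correct.

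The gap is in your candidate for $\eta$. You propose a composite of three braidings; the paper's $\eta^{X\boxtimes Y}_{\widehat{M}}$ is of this shape \emph{but additionally carries a twist $\theta_{Y}$ on the $Y$-strand}, and the authors explicitly flag that this uses the pivotal structure of $\cC$. The twist is not decorative. In the monoidality check (your condition (b)), compare how the $Y_{1},Y_{2}$ strands cross on the two sides of the square: the tensorator $l_{X_{1}\boxtimes Y_{1},X_{2}\boxtimes Y_{2},-}$ contributes a single crossing $\operatorname{br}_{Y_{2},Y_{1}}$ before applying $\eta^{X_{1}X_{2}\boxtimes Y_{1}Y_{2}}$, whereas on the other side the horizontal composite $\eta^{X_{1}\boxtimes Y_{1}}\circ\eta^{X_{2}\boxtimes Y_{2}}$ drags $Y_{1}$ across $Y_{2}$ with whatever crossing your pure-braiding $\eta$ dictates. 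For any consistent choice of over/under in a pure-braiding $\eta$, these two crossings end up of opposite type, so the two sides differ by a full double braiding $\operatorname{br}_{Y_{1},Y_{2}}\operatorname{br}_{Y_{2},Y_{1}}$ on the $Y$-strands. The balancing identity
\[
\theta_{Y_{1}\otimes Y_{2}}=(\theta_{Y_{1}}\otimes\theta_{Y_{2}})\cdot\operatorname{br}_{Y_{2},Y_{1}}\operatorname{br}_{Y_{1},Y_{2}}
\]
is precisely what absorbs this discrepancy once a twist on $Y$ is built into $\eta$. Hence your claim that ``both compatibilities reduce to repeated applications of the hexagon axiom and naturality of the braiding'' is where the argument fails: the ribbon/balancing axiom is also needed, which is why the paper (and its introduction) stresses that the argument relies nontrivially on the spherical structure. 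Once you insert $\theta_{Y}$ into $\eta$, both (a) and (b) do become the routine isotopies you anticipate.
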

\begin{proof}
The statement of this lemma is equivalent to showing that the equivalences $\alpha^+_{\widehat{\cC}}$ and  $\alpha^-_{\widehat{\cC}}\circ \sigma$ are monoidally isomorphic.

Let us look at $\alpha^+_{\widehat{\cC}}$ and $\alpha^-_{\widehat{\cC}}$ in more detail. Both are monoidal functors $\cC\boxtimes \cC \to \operatorname{Fun}_{\cC\boxtimes \cC}(\widehat{\cC},\widehat{\cC})$. For a given object $X\boxtimes Y \in \cC\boxtimes \cC$, both $\alpha^+_{\widehat{\cC}}$ and $\alpha^-_{\widehat{\cC}}$ send this object to the same endofunctor $X\boxtimes Y \triangleright \widehat{?} = \widehat{X\otimes ? \otimes Y}$. However, the module functor structure maps for $\alpha^+_{\widehat{\cC}}(X\boxtimes Y)$ and $\alpha^-_{\widehat{\cC}}(X\boxtimes Y)$ are slightly different. They are respectively given by:

\begin{align*}
L^+_{X\boxtimes Y, Z\boxtimes W, \widehat{M}} &:= l_{X\boxtimes Y, Z\boxtimes W, \widehat{M}}^{-1} \operatorname{br}_{X\boxtimes Y, Z\boxtimes W}\triangleright \id_{ \widehat{M}} l_{Z\boxtimes W, X\boxtimes Y, \widehat{M}},\\
L^-_{X\boxtimes Y, Z\boxtimes W, \widehat{M}} &:= l_{X\boxtimes Y, Z\boxtimes W, \widehat{M}}^{-1} \operatorname{br}^{-1}_{Z\boxtimes W, X\boxtimes Y}\triangleright \id_{ \widehat{M}} l_{Z\boxtimes W, X\boxtimes Y, \widehat{M}}.
\end{align*}

Graphically, we draw these structure maps as:
\begin{center}
$L^+_{X\boxtimes Y, Z\boxtimes W, \widehat{M}} = \raisebox{-.5\height}{ \includegraphics[scale = .3]{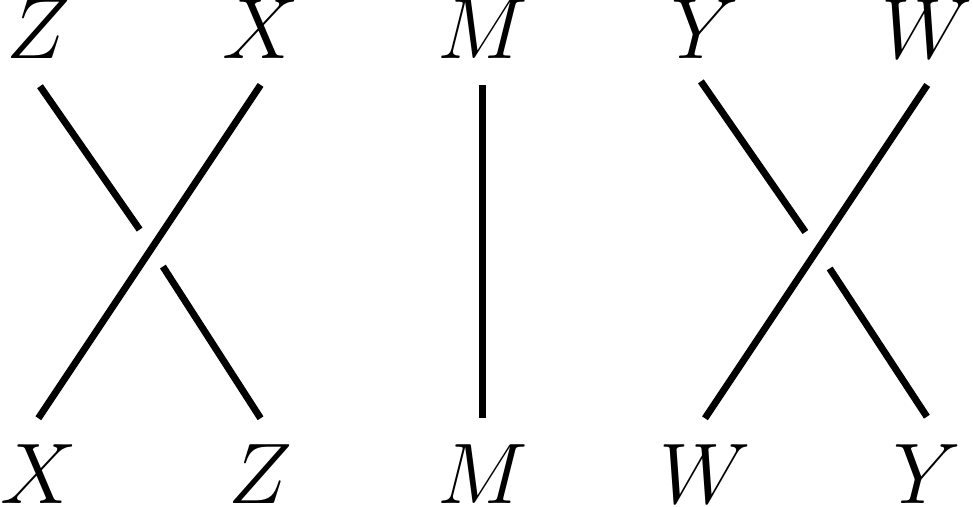}}, \quad L^-_{X\boxtimes Y, Z\boxtimes W, \widehat{M}} = \raisebox{-.5\height}{ \includegraphics[scale = .3]{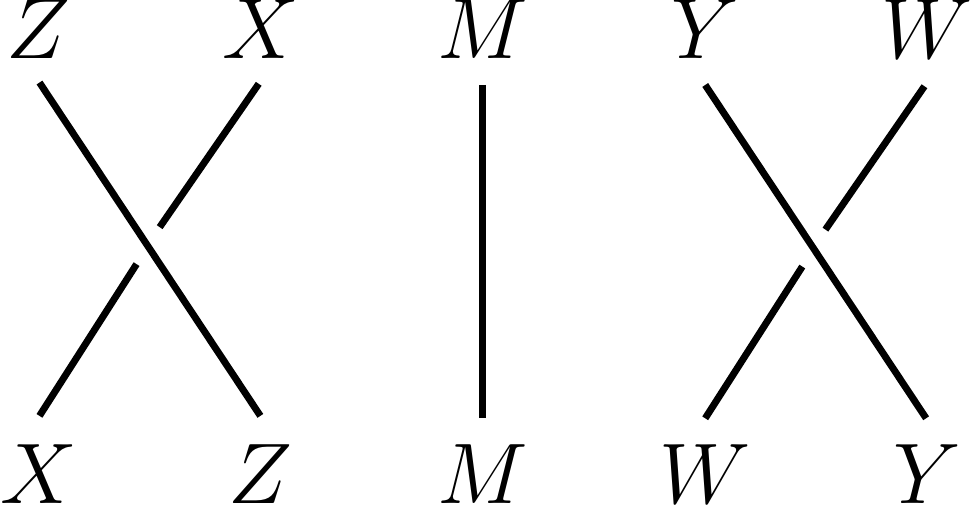}}.$
\end{center}

To complete our proof we have to give a monoidal natural isomorphism $\eta: \alpha^+_{\widehat{\cC}} \to \alpha^-_{\widehat{\cC}} \circ \sigma$. For each $X\boxtimes Y \in \cC\boxtimes \cC$, the component $\eta^{X\boxtimes Y}$ will be a module natural isomorphism $\alpha^+_{\widehat{\cC}}(X\boxtimes Y) \to \alpha^-_{\widehat{\cC}}(Y\boxtimes X)$. For each $\widehat{M} \in \widehat{\cC}$, we define the components of the module natural isomorphism $\eta^{X\boxtimes Y}$ as follows:
\begin{center}
$\eta_{\widehat{M}}^{X\boxtimes Y}$ := \raisebox{-.5\height}{ \includegraphics[scale = .4]{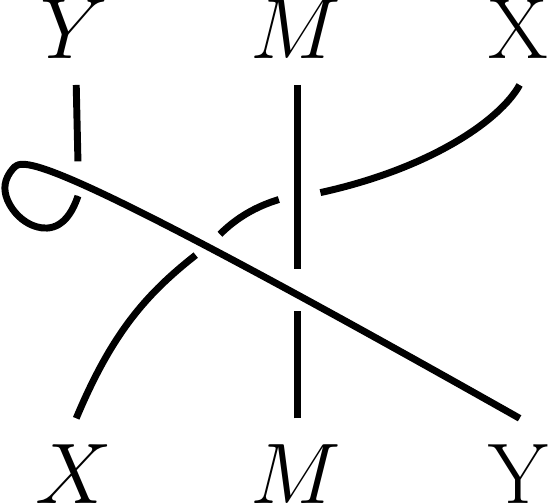}}.
\end{center}
Note that this natural isomorphism involves a twist of the object $Y$, which requires the pivotal strucure of our modular tensor category $\cC$.

It is a straightforward braid isotopy to show that the following diagram commutes:
$$\begin{CD}
Z\boxtimes W \triangleright\alpha^+_{\widehat{\cC}}(X\boxtimes Y) ( \widehat{M}) @> L^+_{Z\boxtimes W, X\boxtimes Y, \widehat{M}} >> \alpha^+_{\widehat{\cC}}(X\boxtimes Y) ( Z\boxtimes W \triangleright \widehat{M})\\
@V \id_{Z\boxtimes W}\triangleright \eta^{X\boxtimes Y}_{\widehat{M}} VV @VV \eta^{X\boxtimes Y}_{Z\boxtimes W \triangleright \widehat{M}} V \\
Z\boxtimes W \triangleright \alpha^-_{\widehat{\cC}}(Y\boxtimes X) ( \widehat{M}) @> L^-_{Z\boxtimes W, X\boxtimes Y, \widehat{M}} >> \alpha^-_{\widehat{\cC}}(Y\boxtimes X) (Z\boxtimes W \triangleright \widehat{M}).
\end{CD}$$

Thus $\eta^{X\boxtimes Y}$ is a module natural isomorphism $\alpha^+_{\widehat{\cC}}(X\boxtimes Y) \to \alpha^-_{\widehat{\cC}}(Y\boxtimes X)$. 

Finally, we have to show that $\eta$ (whose components are the natural isomorphisms $\eta^{X\boxtimes Y}$'s) is a monoidal natural isomorphism from $\alpha^+_{\widehat{\cC}} \to \alpha^-_{\widehat{\cC}} \circ \sigma$. That is we have to show the following diagram commutes:
$$\begin{CD}
\alpha^+_{\widehat{\cC}}(X_1\boxtimes Y_1)\circ \alpha^+_{\widehat{\cC}}(X_2\boxtimes Y_2) @> l_{X_1\boxtimes Y_1, X_2\boxtimes Y_2,-} >>\alpha^+_{\widehat{\cC}}(X_1X_2\boxtimes Y_1Y_2) \\
@V \eta^{X_1\boxtimes Y_1}\circ \eta^{X_2\boxtimes Y_2} VV @VV \eta^{X_1X_2\boxtimes Y_1Y_2} V \\
\alpha^-_{\widehat{\cC}}(Y_1\boxtimes X_1)\circ \alpha^-_{\widehat{\cC}}(Y_2\boxtimes X_2) @> l_{Y_1\boxtimes X_1, Y_2\boxtimes X_2,-} >> \alpha^-_{\widehat{\cC}}(Y_1Y_2\boxtimes X_1X_2).
\end{CD}$$
This equation is an equality of natural isomorphisms. Thus we have to show that each component is equal. Let $\widehat{M} \in \widehat{\cC}$, then

\[(\eta^{X_1X_2\boxtimes Y_1Y_2}\cdot l_{X_1\boxtimes Y_1, X_2 \boxtimes Y_2,-})_{\widehat{M}} =\raisebox{-.5\height}{ \includegraphics[scale = .4]{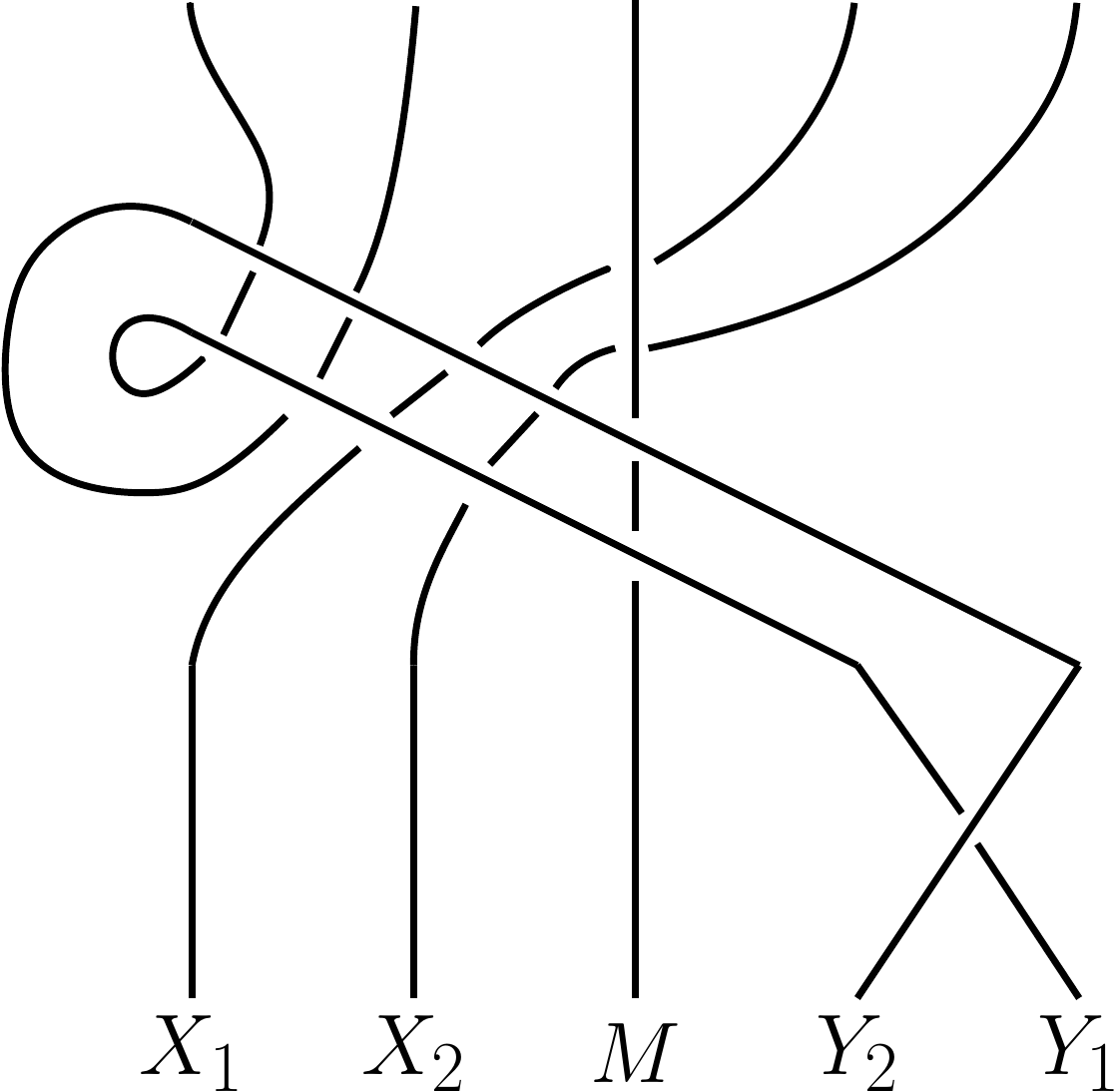}},\]

and

\[(l_{X_1\boxtimes Y_1 X_2\boxtimes Y_2,-} \cdot \eta^{X_1\boxtimes Y_1}\circ \eta^{X_2\boxtimes Y_2})_{\widehat{M}} = \raisebox{-.5\height}{ \includegraphics[scale = .4]{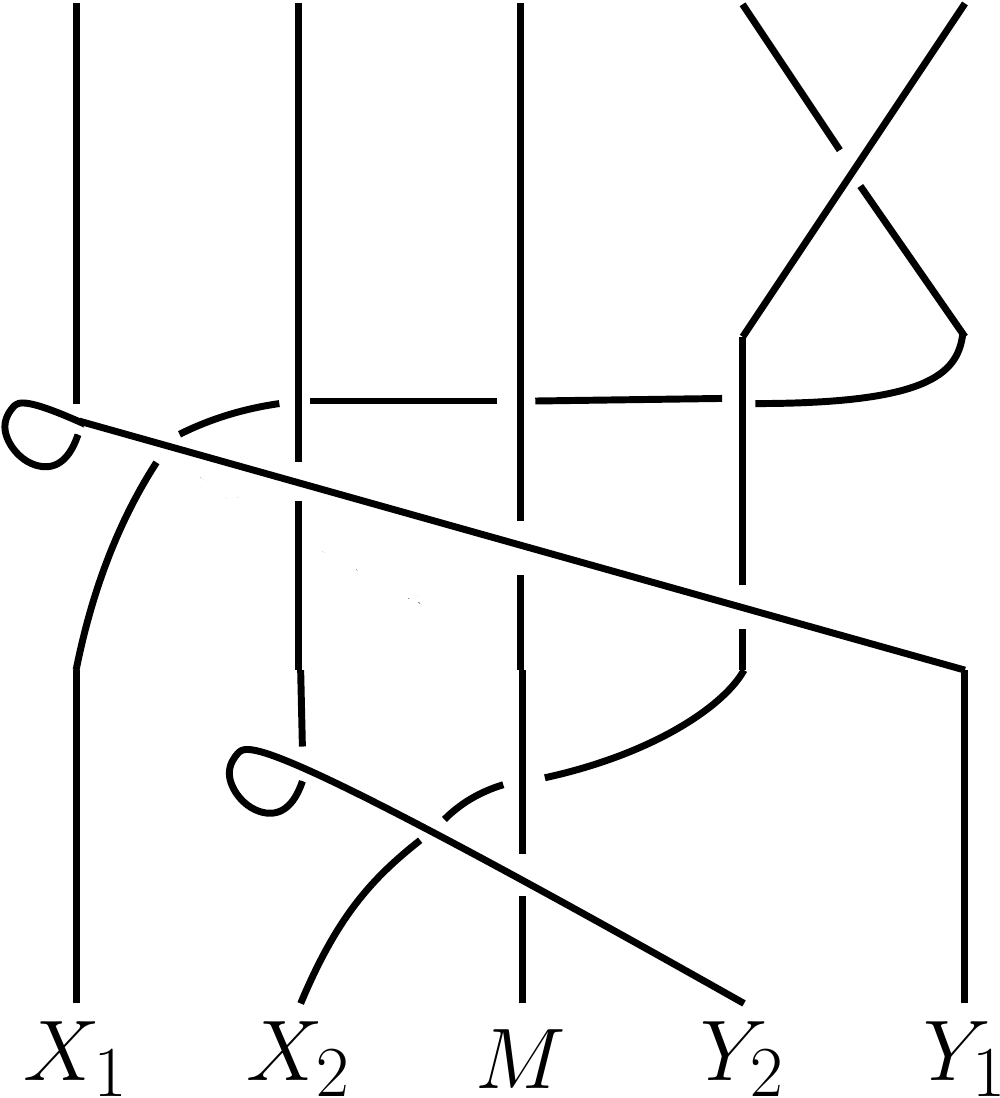}}.\]
Again this is a straightforward isotopy.

Hence $\eta: \alpha^+_{\widehat{\cC}} \to \alpha^-_{\widehat{\cC}}\circ \sigma$ is a monoidal natural isomorphism, which proves the statement of the Lemma.
\end{proof}


\section{$\mathbb{Z}/ 2\mathbb{Z}$-crossed braided extension of $\mathcal{C}\boxtimes\mathcal{C}$ by $\widehat{\mathcal{C}}$ and gauging}

By Lemma~\ref{lem:existence} we know there exist precisely two $\mathbb{Z}/2\mathbb{Z}$-crossed braided extensions of $\mathcal{C}\boxtimes \mathcal{C}$ corresponding to the swap action. Since these two extensions are related by a $3$-cocycle twist, they have the same fusion rules. We have just shown in Lemma~\ref{lem:mod} that the non-trivial graded piece of both of these extensions is $\widehat{\cC}$ (as a module category). We now aim to determine these fusion rules and the $\mathbb{Z}/2\mathbb{Z}$-action on the extensions $(\cC \boxtimes \cC) \oplus \widehat{\cC}$. Fortunately, one of the extensions was explicitly constructed in \cite[Section 4]{MR2806495}, and they compute the fusion rules. We give here a short argument which only applies to the case when $\mathcal{C}$ has no invertible objects.

\begin{prop}\cite[Formula 58]{MR2806495}\label{fusionextension} Let $\mathcal{C}$ be a modular category. Let $\mathcal{D}:=(\mathcal{C}\boxtimes \mathcal{C})\oplus \widehat{\mathcal{C}}$ be either of the extensions constructed in Lemma~\ref{lem:existence} corresponding to the swap action on $\mathcal{C}\boxtimes \mathcal{C}$. Then $\dim(\mathcal{D}(\widehat{X}\otimes \widehat{Y}, Z\boxtimes W))=\dim(\mathcal{C}(XY, ZW))$.
\end{prop}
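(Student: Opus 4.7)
The plan is to reduce $\mathcal{D}(\widehat{X}\otimes \widehat{Y}, Z\boxtimes W)$ to a hom space in $\cC$ using (i) rigidity in $\mathcal{D}$ to move $\widehat{Y}$ across the hom, (ii) the $\mathbb{Z}/2\mathbb{Z}$-crossed braiding to replace a right tensor by $Z\boxtimes W$ with a left action by $W\boxtimes Z$, and (iii) the explicit left module structure on $\widehat{\cC}$ described in Section~\ref{sec:module}.

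Step (i) is the standard rigidity identity $\dim\mathcal{D}(\widehat{X}\otimes \widehat{Y}, Z\boxtimes W) = \dim\mathcal{D}(\widehat{X}, (Z\boxtimes W)\otimes \widehat{Y}^*)$. Granting the identification $\widehat{Y}^*\cong\widehat{Y^*}$, step (iii) gives $(Z\boxtimes W)\otimes \widehat{Y^*} = (Z\boxtimes W)\triangleright \widehat{Y^*} = \widehat{Z\otimes Y^*\otimes W}$, after which the fully faithful embedding $\widehat{\cC}\hookrightarrow\mathcal{D}$ reduces the hom to $\dim\cC(X, Z\otimes Y^*\otimes W)$. Applying the braiding of $\cC$ to swap $Y^*$ past $W$, followed by rigidity in $\cC$, then yields $\dim\cC(X, ZWY^*) = \dim\cC(XY, ZW)$, as required.

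The main obstacle is thus the identification $\widehat{Y}^*\cong\widehat{Y^*}$, and it is here that the standing hypothesis $\Inv(\cC)=\{\mathbf{1}\}$ enters. Define $D:\cC\to\cC$ on isomorphism classes by $\widehat{D(Y)}:=\widehat{Y}^*$. Applying the rigidity identity $(A\otimes B)^* = B^*\otimes A^*$ to $(Z\boxtimes W)\otimes\widehat{X} = \widehat{ZXW}$ and using the $\mathbb{Z}/2\mathbb{Z}$-crossed braiding (which, upon passing $\widehat{X}^*\in\widehat{\cC}$ past $Z^*\boxtimes W^*\in\cC\boxtimes\cC$, swaps the two tensor factors via $\sigma$) yields the functional equation
\[ D(Z\otimes X\otimes W)\cong W^*\otimes D(X)\otimes Z^* \]
in $\cC$. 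Specializing to $Z=X=\mathbf{1}$ gives $D(W)\cong W^*\otimes g$ with $g:=D(\mathbf{1})$. Since $D$ must send simples to simples, $D(g)\cong g^*\otimes g$ is simple; but $g^*\otimes g$ always contains $\mathbf{1}$, so $g^*\otimes g\cong\mathbf{1}$ and $g\in\Inv(\cC)$. Under the no-invertibles hypothesis this forces $g=\mathbf{1}$, so $D(Y)\cong Y^*$ and $\widehat{Y}^*\cong\widehat{Y^*}$, which closes the argument.
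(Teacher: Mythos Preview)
Your argument is correct and follows the same overall scheme as the paper's proof: both reduce to establishing $\widehat{Y}^{*}\cong\widehat{Y^{*}}$ under the no-invertibles hypothesis, and then finish by Frobenius reciprocity, the explicit left $\cC\boxtimes\cC$-module action on $\widehat{\cC}$, and the braiding in $\cC$.

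The difference lies in how that dual identification is obtained. The paper argues structurally: the left-dual functor on $\mathcal{D}$ restricts to a $\cC\boxtimes\cC$-module equivalence $L^{*}:\widehat{\cC}\to\widehat{\cC}^{\op}$, the assignment $L_{*}(\widehat{X}):=\widehat{X^{*}}$ is another such equivalence (using pivotality of $\cC$), and since module equivalences between invertible modules form a torsor over the invertibles, the hypothesis forces $L^{*}\cong L_{*}$. You instead compute directly: using $(A\otimes B)^{*}\cong B^{*}\otimes A^{*}$ together with the $G$-crossed braiding $\widehat{M}\otimes(Z\boxtimes W)\cong(W\boxtimes Z)\otimes\widehat{M}$, you derive the functional equation $D(ZXW)\cong W^{*}D(X)Z^{*}$, solve it as $D(W)\cong W^{*}\otimes g$, and then observe $g$ is invertible because $D(g)\cong g^{*}g$ must be simple. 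Your route trades the abstract torsor statement for an explicit use of the crossed braiding; the paper's route never invokes the crossed braiding but does need pivotality to exhibit $L_{*}$ as a module equivalence. Both are short and legitimate.
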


We adopt the notation $\mathcal C (X, Y)$ to denote the space of morphisms in $\mathcal C$ from $X$ to $Y$. Also, we sometimes just use juxtaposition to denote the tensor product of objects in $\mathcal C$.

\begin{proof}[Proof of Proposition~\ref{fusionextension}, assuming $\cC$ has no non-trivial invertible objects]
Taking left (or right) duals of $\mathcal D$ gives a monoidal functor $\mathcal{D}\rightarrow \mathcal{D}^{\mop}$, which since $\cC\boxtimes\cC$ itself is pivotal restricts to a $\cC\boxtimes\cC$-module equivalence $L^{*}: \widehat{\cC} \to \widehat{\mathcal{C}}^{\op}$. It is straightforward to see that the pivotal structure on $\cC$ allows us to extend the functor $L_{*}(\widehat{X}):=\widehat{X^{*}} : \widehat{\mathcal{C}}\rightarrow \widehat{\mathcal{C}}^{\op}$ to a $\mathcal{C}\boxtimes\mathcal{C}$-module equivalence. As $\widehat{\mathcal{C}}$ is an invertible module, the equivalence classes of $\mathcal C\boxtimes \mathcal C$-bimodule equivalences from $\widehat{\mathcal{C}}$ to $\widehat{\mathcal{C}}^{\op}$ are a torsor over the set of invertible objects in $\cC$. By hypothesis there are no non-trivial invertible elements of $\cC$, thus $L_{*}:\widehat{\mathcal{C}}\rightarrow \widehat{\mathcal{C}}^{\op}$ is the unique $\mathcal{C}\boxtimes \mathcal{C}$-module equivalence. This implies $L_{*}\cong L^{*}$ as $\mathcal{C}\boxtimes \mathcal{C}$-module functors, and in particular $^{*}\widehat{X}\cong \widehat{X^{*}}$. Therefore 

$$\dim (\mathcal{D}(\widehat{X}\otimes \widehat{Y}, Z\boxtimes W))=\dim(\mathcal{D}(\widehat{Y}, \widehat{X}^{*} (Z\boxtimes W)))$$
$$=\dim(\mathcal{D}(\widehat{Y}, \widehat{ZX^{*}W}))=\dim(\mathcal{C}(Y, ZX^{*}W))=\dim(\mathcal{C}(XY, ZW)),$$

where in the last step we have used the braiding in $\mathcal{C}$.

\end{proof}

We already know the fusion rules of the subcategory $\mathcal{C}\boxtimes \mathcal{C}$ (the trivial component of the grading) and the $\mathcal{C}\boxtimes \mathcal{C}$-action on $\widehat{\mathcal{C}}$ gives us the rules to tensoring objects of $\mathcal{C}\boxtimes \mathcal{C}$ with objects of $\widehat{\mathcal{C}}$. Then, Propositon \ref{fusionextension} completes the description of the fusion rules for the $\mathbb{Z}/2\mathbb{Z}$-crossed braided extension corresponding to the swap action on $\mathcal{C}\boxtimes \mathcal{C}$. This argument is a major shortcut, which takes advantage of the general nature of the extension theory.

We also would like to know how the non-trivial element of $\mathbb{Z}/2\mathbb{Z}$ acts on this extension, since this would allow us to work out the fusion rules for the equivariantization. As the $\mathbb{Z}/2\mathbb{Z}$-crossed braided extension $(\mathcal{C}\boxtimes \mathcal{C})\oplus \widehat{\mathcal{C}}$ was constructed from the swap auto-equivalence of $\cC \boxtimes \cC$, it follows that the action on the trivial piece of the extension is just the swap auto-equivalence. The action on the non-trivial piece $\widehat{\cC}$ is more difficult to compute. While we aren't able to describe the full tensorator for the non-trivial auto-equivalence, we can compute a small part of it in the case there are no invertible objects. We will see in the next section that this small part is enough to compute the fusion rules for the equivariantization with respect to this action. We also note that in \cite{MR2806495}, they give a formula for the braiding, but there seems to be a certain morphism $\sigma_{M}$ \cite[Formula 107]{MR2806495} which appears in their construction with no formula in terms of $\mathcal{C}$. Naive interpreations as square roots of twist in $\cC$ seem to be inconsistent.

\begin{prop}\label{prop:action} Let $\mathcal{C}$ be a modular category with no non-trivial invertible objects, and $( \mathcal{C}\boxtimes \mathcal{C} )\oplus \widehat{\mathcal{C}}$ either of the $\mathbb{Z}/2\mathbb{Z}$-crossed braided extension constructed in Lemma~\ref{lem:existence}. Let $\Phi$ be the non-trivial auto-equivalence of this $\mathbb{Z}/2\mathbb{Z}$-crossed braided extension. Then $\Phi |_{\widehat{\mathcal{C}}}$ is isomorphic to $\Id_{\widehat{\mathcal{C}}}$ as a functor. A subset of the full tensorator for $\Phi$ is given by:
\[ \mu_{X\boxtimes Y, \widehat{M}} := \raisebox{-.5\height}{ \includegraphics[scale = .4]{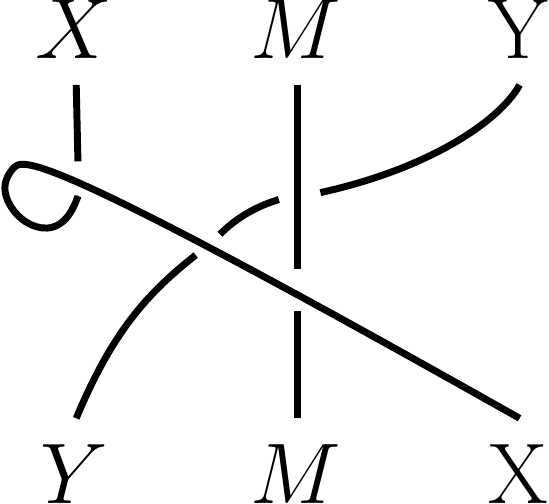}} : \Phi(X\boxtimes Y) \otimes \Phi(\widehat{M} )\to \Phi (X\boxtimes Y \otimes \widehat{M}). \]

Furthermore, a monoidal natural isomorphism $\eta$ from $\Phi^2$ to the identity is given by:
\[ \eta_{X\boxtimes Y} := \id_{X\boxtimes Y} \qquad \text{and} \qquad \eta_{\widehat{X}} =\raisebox{-.5\height}{ \includegraphics[scale = .4]{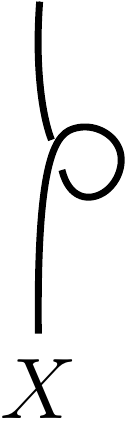}}.\]
\end{prop}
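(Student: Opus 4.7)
The plan is to leverage the uniqueness of module equivalences in the absence of non-trivial invertible objects in $\cC$, in the spirit of the proof of Proposition~\ref{fusionextension}. Since $\mathbb{Z}/2\mathbb{Z}$ is abelian, the $G$-crossed constraint $\Phi(\cC_{g})\subseteq \cC_{ghg^{-1}}$ forces $\Phi$ to preserve the grading, and by construction of the extension $\Phi|_{\cC\boxtimes\cC}$ is the swap $\sigma$. The tensorator of $\Phi$ endows $\Phi|_{\widehat{\cC}}$ with the structure of a $\cC\boxtimes\cC$-module equivalence from $\widehat{\cC}$ to its $\sigma$-twist. By Lemma~\ref{lem:mod}, this twist is $\cC\boxtimes\cC$-module equivalent to $\widehat{\cC}$ itself, and since $\widehat{\cC}$ is invertible with $\Inv(\cC)$ trivial, all module auto-equivalences of $\widehat{\cC}$ are isomorphic to $\Id_{\widehat{\cC}}$. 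This establishes the first claim.

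For the second claim, fix an isomorphism $\Phi|_{\widehat{\cC}}\cong \Id_{\widehat{\cC}}$. Then $\mu_{X\boxtimes Y, \widehat{M}}$ must be a morphism in $\cC$ from $Y\otimes M\otimes X$ to $X\otimes M\otimes Y$, and compatibility with the $\cC\boxtimes\cC$-module action, together with compatibility with the $G$-braiding of the crossed extension, selects the specific braid-isotopy class shown in the displayed string diagram. This can be verified directly, parallel to the argument in Lemma~\ref{lem:mod}: the pivotal twist on the $Y$-strand is exactly the contribution needed to reconcile the swap with the pivotal structure of $\cC$. For the third claim, on $\cC\boxtimes\cC$ we have $\sigma^{2}=\Id$ strictly, so $\eta_{X\boxtimes Y}=\id_{X\boxtimes Y}$ is forced. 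On $\widehat{\cC}$, composing the fixed isomorphism $\Phi|_{\widehat{\cC}}\cong \Id$ with itself and tracking through $\mu$ twice yields an endomorphism of $\widehat{X}$; unwinding the resulting braid diagram and invoking the pivotal structure identifies it with the full twist on $X$.

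Monoidality of $\eta$ then reduces to a string-diagram identity relating the twist to the module structure morphisms $l^{\widehat{\cC}}$, verifiable by the same style of braid isotopy used throughout Section~\ref{sec:module}. The principal obstacle is the explicit identification of $\mu$: while the first claim and Lemma~\ref{lem:existence} guarantee existence of \emph{some} tensorator, pinning it down to the displayed formula requires carefully unpacking how the $G$-braiding of the crossed extension interacts with the pivotal twist on $\cC$, and in particular justifying why no ambiguity by an extra automorphism remains (here again the absence of invertible objects does the work). All remaining steps are essentially bookkeeping with braid diagrams.
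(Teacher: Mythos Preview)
Your approach to the first two claims is essentially the same as the paper's: you use that $\Phi|_{\widehat{\cC}}$ is a $\sigma$-twisted $\cC\boxtimes\cC$-module auto-equivalence of $\widehat{\cC}$, that such auto-equivalences form a torsor over $\Inv(\cC)$, and that the displayed braid diagram gives the structure morphisms of the identity as such a twisted equivalence. (The paper cites \cite{1711.00645} for the twisted-module framework rather than invoking Lemma~\ref{lem:mod} directly, and does not appeal to the $G$-braiding as you do, but the substance is the same.) The formula for $\mu_{X\boxtimes Y,\widehat{M}}$ then drops out as precisely this structure morphism, so your remark that it is ``selected'' by compatibility is correct, though the $G$-braiding plays no role.

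The genuine gap is in your treatment of $\eta_{\widehat{X}}$. You claim that composing $\mu$ with itself and unwinding the braid yields the full twist on $X$. But this computation only shows that the twist is \emph{a} module natural isomorphism $\Phi^{2}|_{\widehat{\cC}}\to \Id_{\widehat{\cC}}$; such isomorphisms form a torsor over $\mathbb{C}^{\times}$, so the actual $\eta|_{\widehat{\cC}}$ coming from the categorical action is \emph{a priori} $\alpha\cdot(\text{twist})$ for some unknown scalar $\alpha$. Nothing you have written pins down $\alpha$, because $\alpha$ is entangled with the part of the tensorator you never compute, namely $\tau_{\widehat{X},\widehat{Y}}:\Phi(\widehat{X})\otimes\Phi(\widehat{Y})\to\Phi(\widehat{X}\otimes\widehat{Y})$. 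The paper's key move, which you are missing, is to observe that since $Z^{2}(\mathbb{Z}/2\mathbb{Z},\mathbb{C}^{\times})=B^{2}(\mathbb{Z}/2\mathbb{Z},\mathbb{C}^{\times})=\mathbb{C}^{\times}$, one may rescale $\tau|_{\widehat{\cC}}$ by $\alpha$ without changing the isomorphism class of $\Phi$, and this rescaling absorbs the unknown scalar so that $\eta_{\widehat{X}}$ becomes exactly the twist. In other words, the paper does not compute $\eta_{\widehat{X}}$ from the other data; it \emph{arranges} it by exploiting a cohomological freedom in data it never needs to make explicit.
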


\begin{proof} 
From results of \cite[Section 5]{1711.00645}, $\Phi |_{\widehat{\mathcal{C}}}$ has the structure of a $\sigma$-twisted module auto-equivalence of $\widehat{\mathcal{C}}$ (as defined in Definition 5.1 of the same paper). It is straightforward to verify that the functor $\Id_{\widehat{\mathcal{C}}}$, with structure morphisms 
\[ \raisebox{-.5\height}{ \includegraphics[scale = .4]{YangBaxterator2}} : \sigma(X\boxtimes Y) \otimes \Id_{\widehat{\mathcal{C}}}(\widehat{M} )\to \Id_{\widehat{\mathcal{C}}}(X\boxtimes Y \otimes \widehat{M}) \]
 is a $\sigma$-twisted module auto-equivalence of $\widehat{\mathcal{C}}$. Furthermore, as $\sigma$-twisted module auto-equivalences of $\widehat{\mathcal{C}}$ form a torsor over the invertible elements of $\widehat{\mathcal{C}}$, we must have that the functor given in the statement is the only such one. Thus $\Phi |_{\widehat{\cC}}$ has to be the described functor.

Let $\tau |_{\widehat{\cC}}$ be the restriction of the tensorator of $\Phi$ to $\widehat{\mathcal{C}}$. While we do not compute $\tau |_{\widehat{\cC}}$ directly, we note that as $Z^2(\mathbb{Z}/2\mathbb{Z} , \mathbb{C}^\times) = B^2(\mathbb{Z}/2\mathbb{Z} , \mathbb{C}^\times) =\mathbb{C}^\times$. Then, we can rescale $\tau |_{\widehat{\cC}}$ by any non-zero complex number, without changing the isomorphism class of $\tau$. Details on why we can preform this rescaling can be found in the proof of \cite[Lemma 8.2]{1711.00645}.

As the monoidal auto-equivalence $\Phi$ came from a $\mathbb{Z}/2\mathbb{Z}$-categorical action, we know there exists a monoidal natural isomorphism $\eta: \Phi^2 \to \Id_{\mathcal{C}\boxtimes \mathcal{C}\oplus \widehat{\mathcal{C}}}$. It is straightforward to verify that $\eta_{X\boxtimes Y} = \id_{X\boxtimes Y}$ satisfies the conditions to be a monoidal natural isomorphism $\sigma\circ\sigma \to \Id_{\cC \boxtimes \cC}$.

We can restrict $\eta$ to $\widehat{\cC}$ to get a module functor natural isomorphism:
\[ \eta |_{\widehat{\cC}} : \Phi^2 |_{\widehat{\mathcal{C}}} \to \Id_{\widehat{\cC}}.\]

The isomorphisms $\alpha \raisebox{-.5\height}{ \includegraphics[scale = .4]{X_twist}} : \Phi^2 |_{\widehat{\mathcal{C}}} \to \Id_{\widehat{\cC}}$, for $\alpha \in \mathbb{C}^\times$, have the structure of module functor natural isomorphisms $\Phi^2 |_{\widehat{\mathcal{C}}} \to \Id_{\widehat{\cC}}$. As module functor natural isomorphisms $\Phi^2 |_{\widehat{\mathcal{C}}} \to \Id_{\widehat{\cC}}$ form a torsor over $\mathbb{C}^\times$, we must have that $\eta |_{\widehat{\cC}} = \alpha \raisebox{-.5\height}{ \includegraphics[scale = .4]{X_twist}}$ for some (still undetermined) scalar $\alpha$.

To extend the module functor natural isomorphism $ \alpha \raisebox{-.5\height}{ \includegraphics[scale = .4]{X_twist}}$ to a monoidal natural isomorphism $\Phi^2 \to \Id$, we need the following diagram to commute:
$$\begin{CD}
 \Phi^2(\widehat{X}) \otimes \Phi^2(\widehat{Y}) @> \alpha \raisebox{-.5\height}{ \includegraphics[scale = .4]{X_twist}} \otimes \alpha \raisebox{-.5\height}{ \includegraphics[scale = .4]{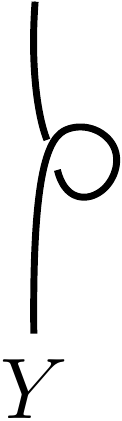}} >>\widehat{X} \otimes \widehat{Y} \\
@V \tau_{\widehat{X},\widehat{Y} }\cdot\tau_{\widehat{X},\widehat{Y} } VV @VV \id_{\widehat{X}\otimes \widehat{Y} } V \\
\Phi^2(\widehat{X}\otimes \widehat{Y}) @>\eta_{\widehat{X}\otimes \widehat{Y}} >> \widehat{X}\otimes \widehat{Y}.
\end{CD}$$
 We know the monoidal natural isomorphism $\eta$ exists, therefore there must exist some non-zero $\alpha$ making the above diagram commute. We can then rescale $\tau |_{\widehat{\cC}}$ by $\alpha$ to arrange that $\eta_{\widehat{X}} = \raisebox{-.5\height}{ \includegraphics[scale = .4]{X_twist}}$.

\end{proof}

\section{Equivariatization}

In this section, we will use the information on the fusion rules of the extension and the $\mathbb{Z}/2\mathbb{Z}$-action established in the previous section to establish the fusion rules for the equivariantization. At a critical juncture we will need to use a braid-to-rotation trick presented in \cite{MR3859960}, which allows us to find certain multiplicities of the fusion rules in terms of multiplicities of eigenvalues of generalized rotation operators. This information can be read off from the modular data of $\mathcal{C}$. First we briefly recall some generalities on equivariantization from \cite{MR3059899}.

Suppose we have a categorical action of $G$ on a tensor category $\mathcal{C}$. For each $g\in G$, we will denote the corresponding monoidal functor $F_{g}$, with tensorator isomorphisms $\mu^{g}_{X,Y}: F_{g}(X)\otimes F_{g}(Y)\rightarrow F_{g}(X\otimes Y)$. The categorical action gives us natural isomorphisms $\rho_{g,h}: F_{g}\circ F_{h}\rightarrow F_{gh}$. 
\begin{defn}\cite[Definition 3.1]{Kirillov_ong-equivariant}
A $G$-equivariant object is a pair $(X, u)$, where $X\in \mathcal{C}$ and $u=\{u_{g}\}_{g\in G}$, where $u_{g}: F_{g}(X)\rightarrow X$ is a natural isomorphism satisfying $u_{gh}\cdot\rho_{g,h}=u_{g}\cdot F_{g}(u_h)$. The category $\mathcal{C}^{G}$ of $G$-equivariant objects and $G$-equivariant morphisms is a tensor category, with $(X, u)\otimes (Y,w) :=(X\otimes Y, u\otimes w) $, where $(u\otimes w)_{g}:= (u_{g}\otimes w_{g})\circ (\mu^{g}_{X,Y})^{-1}$. This category is called the $G$-equivariantization of $\cC$.
\end{defn}

\begin{lem}\cite[Corollary 2.14]{MR3059899}
The simple objects in $\mathcal{C}^{G}$ are given by pairs $(\Gamma, \pi)$, where $\Gamma$ is an orbit of the induced action of $G$ on $Irr(\mathcal{C})$ and $\pi$ is an irreducible projective representation of the isotropy group of a simple object in the orbit (note all isotropy groups are isomorphic). The actual object underlying $(\Gamma, \pi)$ is, as an object, isomorphic to $S_{\Gamma, \pi}:=\pi\otimes \left(\bigoplus_{X\in \Gamma} X\right)$, that is, the direct sum of the simple objects in the orbit, with multiplicity the dimension of the projective representation.
\end{lem}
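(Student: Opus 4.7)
The plan is to decompose a simple $G$-equivariant object via the multiplicities of simples of $\mathcal{C}$ in its underlying object, and then track how the equivariant structure constrains those multiplicities to assemble into a projective representation of an isotropy group.

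First, let $(Y,u)$ be a simple object of $\mathcal{C}^{G}$ and decompose $Y \cong \bigoplus_{X\in Irr(\mathcal{C})} V_{X}\otimes X$, where $V_{X}:=\mathcal{C}(X,Y)$ is the multiplicity space. For each $g\in G$, the isomorphism $u_{g}\colon F_{g}(Y)\to Y$ restricts on isotypic components to isomorphisms $V_{X}\otimes F_{g}(X)\xrightarrow{\sim} V_{gX}\otimes gX$, where $gX$ denotes the isomorphism class of $F_{g}(X)$. In particular $V_{X}\neq 0$ if and only if $V_{gX}\neq 0$, so the support of $Y$ in $Irr(\mathcal{C})$ is a $G$-stable set, and simplicity of $(Y,u)$ forces it to be a single orbit $\Gamma$. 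Pick $X_{0}\in\Gamma$ with isotropy group $H\leq G$; the underlying object is then of the form $V\otimes \bigl(\bigoplus_{X\in\Gamma} X\bigr)$ for some finite-dimensional vector space $V$, using that all the $V_{gX_{0}}$ are non-canonically identified with $V_{X_{0}}$ via the $u_{g}$.

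Next, for each $h\in H$ fix an isomorphism $\phi_{h}\colon F_{h}(X_{0})\to X_{0}$ and define $\pi(h)\colon V_{X_{0}}\to V_{X_{0}}$ as the map induced on multiplicity spaces by $u_{h}$ and $\phi_{h}$. The equivariance relation $u_{gh}\cdot \rho_{g,h}=u_{g}\cdot F_{g}(u_{h})$, together with the identification $F_{g}\circ F_{h}(X_{0})\cong F_{gh}(X_{0})$, yields $\pi(h_{1})\pi(h_{2})=c(h_{1},h_{2})\,\pi(h_{1}h_{2})$ for a $2$-cocycle $c\in Z^{2}(H,\mathbb{C}^{\times})$ depending only on the categorical action and the chosen $\phi_{h}$. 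Thus $\pi$ is a $c$-projective representation of $H$, and a Schur-lemma argument applied to intertwiners of equivariant objects identifies simplicity of $(Y,u)$ with irreducibility of $\pi$. Conversely, given an orbit $\Gamma$, a base point $X_{0}\in\Gamma$, coset representatives $\{g_{i}\}$ of $H$ in $G$, and an irreducible $c$-projective representation $\pi$ of $H$, one builds $Y:=\pi\otimes\bigoplus_{i} g_{i}X_{0}$ and defines $u_{g}$ by combining the $\pi$-action on coset-ordered blocks with the tensorators $\rho$. Equivariance holds precisely because the two cocycles cancel, and independence of the chosen $X_{0}\in\Gamma$ follows since conjugating by $g\in G$ sends $H$ to $gHg^{-1}$ and $c$ to a cohomologous cocycle, so the isomorphism class of the pair $(\Gamma,\pi)$ is well-defined.

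The main obstacle I anticipate is the careful bookkeeping of the $2$-cocycle $c$: making sure its cohomology class is invariant under the different choices of $\phi_{h}$ and of representative $X_{0}\in\Gamma$, and checking that the tensorator $\mu^{g}$ does not introduce additional twists when passing between the equivariant hom-space of $(Y,u)$ and the endomorphism algebra of $V_{X_{0}}$. Once this bookkeeping is in place, the statement reduces to the elementary fact that a projective representation of a finite group decomposes uniquely into irreducibles, and the formula $S_{\Gamma,\pi}\cong \pi\otimes\bigoplus_{X\in\Gamma} X$ is immediate from the construction in the converse direction.
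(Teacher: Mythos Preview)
The paper does not supply its own proof of this lemma: it is stated with a citation to \cite[Corollary 2.14]{MR3059899} and used as a black box, so there is nothing in the paper to compare your argument against.

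That said, your sketch is the standard one and is essentially correct. A couple of places where you are being a bit quick: the step ``simplicity forces the support to be a single orbit'' deserves one line of justification (if the support contained two orbits, projection onto one isotypic block would be a nontrivial $G$-equivariant idempotent); and in the converse direction your description of $u_{g}$ as ``combining the $\pi$-action on coset-ordered blocks with the tensorators $\rho$'' is really an induction construction $\operatorname{Ind}_{H}^{G}$ in the twisted sense, and writing it out explicitly is exactly the cocycle bookkeeping you flag as the main obstacle. None of this is a gap in the approach, only in the level of detail.
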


If $\mathcal{C}$ is a fusion category, its equivariantization $\mathcal{C}^{G}$ is again a fusion category. 
Let $(X,u)\in \mathcal{C}^{G}$ and $Y\in Irr(\mathcal{C})$. The subgroup $G_{Y} = \{g\in G : F_g(Y)\cong Y\}$ is called the isotropy/intertia subgroup of $[Y]$. Then for each $g\in G_{Y}$, we can pick some $c_{g}: F_{g}(Y)\rightarrow Y$. We can define a function $\alpha_Y: G_{Y}\times G_{Y}\rightarrow \mathbb{C}$ by 
$$\alpha_{Y}(g,h) id_{Y}:= c_{gh}\cdot \rho_{g,h}\cdot F_{g}(c^{-1}_{h})\cdot c^{-1}_{g}$$ 

Then $\alpha_{Y}$ is a 2-cocycle on the group $G_{Y}$, and another choice of isomorphisms $c_{g}$ produces a cohomologically equivalent 2-cocycle (see \cite{MR3059899} Section 2.3). Furthermore we define a map $\pi$ on the space $\mathcal{C}(Y, X)$ by 

$$\pi(g)(f):= u_{g}\cdot F_{g}(f)\cdot c^{-1}_{g},$$

 and it is straightforward to verify that $\pi$ is an $\alpha$-projective representation of $G$, that is, this map satisfies 

$$\pi(g)\pi(h)=\alpha(g,h) \pi(gh).$$

To determine the fusion rules in the equivariantization we will use the fact that for any simple object $(\Gamma, \delta)\in \cC^{G}$, the dimension of the space $\mathcal{C}^{G}((\Gamma, \delta), (X, u))$ is the same as the multiplicity of $\delta$ in the projective representation of $G_{Y}$ on the space $\mathcal{C}(Y, X)$ as described above for any simple object $Y\in \Gamma$. Thus to determine fusion rules it is necessary to first identify the cocycle class of $\mathcal{C}(Y, X)$, and then decompose into irreducible representations to determine the multiplicity with which $\delta$ occurs.

Since $H^{2}(\mathbb{Z}/2\mathbb{Z}, \mathbb{C}^{\times})$ is trivial, then all the projective representations that occur can be normalized (via a suitable choice of the $c_{g}$ isomorphisms) to honest representations of $\mathbb{Z}/2\mathbb{Z}$.

Now let $\mathcal{D}:=(\mathcal{C}\boxtimes \mathcal{C})\oplus \widehat{\mathcal{C}}$ be an extension as discussed above. There are three classes of simple objects in $\mathcal{D}^{\mathbb{Z}/2\mathbb{Z}}$ coming from $\mathcal{C}\boxtimes\mathcal{C}$ and two classes of simple objects coming from $\widehat{\mathcal{C}}$. For each $X\in Irr(\mathcal{C})$, we pick a square root of the twist $\theta_{X} = \raisebox{-.5\height}{ \includegraphics[scale = .3]{X_twist}}$ once and for all, and denote it $\theta^{\frac{1}{2}}_{X}$. The other square root is $-\theta^{\frac{1}{2}}_{X}$. The different classes of simple objects of $\mathcal{D}^{\mathbb{Z}/2\mathbb{Z}}$ are described below:

\begin{enumerate}
\item
For $X,Y\in Irr(\mathcal{C})$ and $X\ne Y$, the object $[X,Y]:= (X\boxtimes Y\oplus Y\boxtimes X, u)$, where $u_{1}:=id_{X\boxtimes Y\oplus Y\boxtimes X}$ and $u_{g}=\tau_{\oplus}$, where $\tau_{\oplus}: X\boxtimes Y\oplus Y\boxtimes X\rightarrow Y\boxtimes X\oplus X\boxtimes Y$ is the canonical isomorphism swapping the additive factors.
\item
For $X\in Irr(\mathcal{C})$, $[X,X]_{\pm}:=(X\boxtimes X,u^{\pm})$, where $u^{\pm}_{1}:=id_{X\boxtimes X}$ , $u^{\pm}_{g}:= \pm id_{X\boxtimes X}$.
\item
For $\widehat{X}\in Irr(\widehat{\mathcal{C}})$, $\widehat{X}_{\pm}:=(\widehat{X}, u^{\pm})$, where $u^{\pm}_{1}:=id_{X\boxtimes X}$ and $u^{\pm}_{g}:= \pm \theta^{\frac{1}{2}}_{X} id_{X\boxtimes X}$.
\end{enumerate}

The first class has trivial isotropy group, while the second two has full isotropy groups.

Using commutativity and Frobenius reciprocity combined with the fact that $(X,Y)^{*}=(X^{*},Y^{*})$, $(X,X)^{*}_{\pm}=(X^{*}, X^{*})_{\pm}$, and $(\widehat{X}_{\pm})^{*}=\widehat{X}^{*}_{\pm}$, it turns out that to determine the full set of fusion rules in $\mathcal{D}^{\mathbb{Z}/2\mathbb{Z}}$, we only need to know 3 general cases, which we present in the following theorem. In the following, we adopt the convention that if $\epsilon\in \{\pm\}$, then $\epsilon$ could refer to a subscript of an object (as in $\widehat{X}_{\pm}$) or the numerical value $\pm1$ depending on context.

Below, we let $S_{XY}=\frac{1}{\sqrt{D}} Tr_{X\otimes Y^{*}} (\sigma_{Y^{*}, X} \circ \sigma_{X, Y^{*}})$, where $Tr$ denotes the (unnormalized) spherical trace, and $\sqrt{D}$ is the positive square root of the global dimenion $D:=\sum_{X\in \text{Irr}(\cC)} d^{2}_{X}$.

\begin{thm}\label{fusionequiv} 

\begin{enumerate}
\item For any $Z\in \mathcal{D}^{\mathbb{Z}/2\mathbb{Z}}$,
$$\dim\left(\mathcal{D}^{\mathbb{Z}/2\mathbb{Z}}\left( [X,Y],\ Z\right)\right)=\dim\left(\mathcal{D}\left(X\boxtimes Y, {\bf{G}} (Z)\right)\right),$$ where ${\bf{G}}:\mathcal{D}^{\mathbb{Z}/2\mathbb{Z}}\rightarrow \mathcal{D}$ is the forgetful functor.
\item
For $\epsilon_{X},\epsilon_{Y}, \epsilon_{Z}\in \{\pm \}$,

$$\dim\left(\mathcal{D}^{\mathbb{Z}/2\mathbb{Z}}( [X,X]_{\epsilon_{X}},\ [Y,Y]_{\epsilon_Y}\otimes [Z,Z]_{\epsilon_Z}) \right)=
\frac{1}{2}N^{X}_{YZ}(N^{X}_{YZ}+\epsilon_{X}\epsilon_{Y} \epsilon_{Z}).$$

\item
For $\epsilon_{X},\epsilon_{Y}, \epsilon_{Z}\in \{\pm \}$,

$$\dim\left(\mathcal{D}^{\mathbb{Z}/2\mathbb{Z}}( \widehat{X}_{\epsilon_{X}},\ [Y,Y]_{\epsilon_Y}\otimes \widehat{Z}_{\epsilon_Z}) \right)=$$
$$\frac{1}{2}\left[ \frac{\theta^{\frac{1}{2}}_{Z}}{\theta^{\frac{1}{2}}_{X}} \epsilon_{X}\epsilon_{Y} \epsilon_{Z}\left(\displaystyle\sum_{P,Q\in Irr(\mathcal{C})} S_{Z^{*}P}S_{X^{*}Q}\left(\frac{\theta_{P}}{\theta_{Q}}\right)^{2} N^{Y}_{PQ}\right) +N^{X}_{Y^{2}Z}\right].$$

\end{enumerate}

\end{thm}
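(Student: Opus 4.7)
The plan is to apply the multiplicity formula of \cite[Corollary~2.14]{MR3059899}: for a simple $(\Gamma,\pi)\in\mathcal{D}^{\mathbb{Z}/2\mathbb{Z}}$ and a target $(W_0,u)\in\mathcal{D}^{\mathbb{Z}/2\mathbb{Z}}$, the dimension of the Hom space equals the multiplicity of $\pi$ in the (normalized) $\mathbb{Z}/2\mathbb{Z}$-representation on $\mathcal{D}(Y_0,W_0)$, where $Y_0\in\Gamma$. Since $H^{2}(\mathbb{Z}/2\mathbb{Z},\mathbb{C}^\times)=0$, the normalizing isomorphisms $c_g:\Phi(Y_0)\to Y_0$ can be chosen to render the projective representation honest, and the multiplicity of the $\epsilon$-representation then becomes $\tfrac12\bigl(\dim\mathcal{D}(Y_0,W_0)+\epsilon\cdot\mathrm{tr}\,\pi(g)\bigr)$. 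Part (1) then falls out immediately from the triviality of the isotropy of $\{X\boxtimes Y,Y\boxtimes X\}$: the formula collapses to ordinary Frobenius reciprocity for the forgetful/induction adjunction. For part (2), I would identify the underlying Hom space as $\mathcal{C}(X,YZ)^{\otimes 2}$; taking $c_g=\id$, the nontrivial element of $\mathbb{Z}/2\mathbb{Z}$ acts as $\epsilon_Y\epsilon_Z$ times the factor-swap (since the tensorator of the permutation action on $\cC\boxtimes\cC$ is the identity), and decomposing into symmetric and antisymmetric parts yields $\tfrac12 N(N+\epsilon_X\epsilon_Y\epsilon_Z)$ with $N=N^X_{YZ}$.

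For part (3), the underlying Hom space is $\mathcal{D}(\widehat{X},\widehat{YZY})\cong\mathcal{C}(X,YZY)$, of dimension $N^X_{Y^2Z}$ (after using the braiding in $\mathcal{C}$ to move the central $Z$ past one $Y$). Using Proposition \ref{prop:action} (namely, $\Phi|_{\widehat{\mathcal{C}}}=\Id$, the explicit form of the tensorator $\mu^{\Phi}$ on $(Y\boxtimes Y,\widehat{Z})$, and $\eta_{\widehat{X}}=\theta_X$), I would take $c_g:=\theta^{1/2}_X\,\id_{\widehat{X}}$; a direct computation with the definition of the $2$-cocycle shows this renders it trivial. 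The equivariant structure on the target factors as $u_g=\epsilon_Y\epsilon_Z\,\theta^{1/2}_Z\cdot(\mu^{\Phi})^{-1}_{Y\boxtimes Y,\widehat{Z}}$, so the induced action on $\mathcal{C}(X,YZY)$ takes the form
\[
\pi(g)\;=\;\epsilon_Y\,\epsilon_Z\,\frac{\theta^{1/2}_Z}{\theta^{1/2}_X}\,R,
\]
where $R$ denotes the diagrammatic operator of pre-composition with the inverse tensorator (essentially a double half-twist winding the two $Y$-strands around $Z$). The multiplicity of the $\epsilon_X$-representation is then $\tfrac12\bigl(N^X_{Y^2Z}+\epsilon_X\epsilon_Y\epsilon_Z\,\tfrac{\theta^{1/2}_Z}{\theta^{1/2}_X}\,\mathrm{tr}(R)\bigr)$, matching the claim provided
\[
\mathrm{tr}(R)\;=\;\sum_{P,Q\in Irr(\mathcal{C})}S_{Z^*P}\,S_{X^*Q}\,\Bigl(\frac{\theta_P}{\theta_Q}\Bigr)^{2} N^{Y}_{PQ}.
\]

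The hard part will be this trace computation, for which I would apply the braid-to-rotation trick of \cite{MR3859960}. The idea is to expand $\mathcal{C}(X,YZY)$ in a basis indexed by the fusion channels $Y\hookrightarrow P\otimes Q$, so that $\mathrm{tr}(R)$ becomes the evaluation of a closed ribbon diagram in $\mathcal{C}$ in which the two $Y$-strands, once closed, wind once around the $Z$- and $X$-strands respectively. The standard identity $\mathrm{Tr}_{A\otimes B^{*}}(\operatorname{br}_{B^*,A}\circ\operatorname{br}_{A,B^*})=\sqrt{D}\,S_{AB}$ then contributes the factors $S_{Z^*P}$ and $S_{X^*Q}$, while the residual twists (from the pivotal structure, the half-twists in the tensorator, and the twist appearing in $\eta$) should combine to give the ratio $(\theta_P/\theta_Q)^{2}$. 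Summation over the fusion multiplicities $N^{Y}_{PQ}$ would then yield the asserted trace formula and complete part (3).
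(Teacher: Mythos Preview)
Parts (1) and (2) match the paper's proof essentially word-for-word: trivial isotropy for (1), and for (2) the swap action on $\mathcal{C}(X,YZ)\otimes_{\mathbb{C}}\mathcal{C}(X,YZ)$ (twisted by $\epsilon_Y\epsilon_Z$) followed by the standard $\mathrm{Sym}/\mathrm{Alt}$ dimension count.

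For part (3), your high-level strategy---choosing $c_g=\theta_X^{1/2}$ to make the representation honest, then reducing to a trace computation---is exactly the paper's. But the paper executes the trace computation differently, and your proposed execution has a gap. The paper first passes through an explicit isomorphism $\Psi:\mathcal{C}(X,YZY)\to\mathcal{C}(Z^{*}\otimes X,\,Y\otimes Y)$ (bending up the $Z$-strand), and then---this is the key move---equips $Z^{*}\otimes X$ with the half-braiding that identifies it with the simple object $Z^{*}\boxtimes X\in\mathcal{C}\boxtimes\mathcal{C}^{\mathrm{rev}}\cong Z(\mathcal{C})$. Under $\Psi$, the operator becomes precisely the Ng--Schauenburg generalized rotation on $\mathcal{C}(Z^{*}\boxtimes X,\,Y^{\otimes 2})$, and the eigenvalue multiplicities are then read off from \cite[Proposition~4.2]{MR3859960}, which packages the Ng--Schauenburg indicator formula together with a finite Fourier transform. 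The summation over $P,Q$ and the factor $(\theta_P/\theta_Q)^{2}N^{Y}_{PQ}$ in the answer come from \emph{that} modular-data formula, not from any choice of basis of the Hom space.

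Your alternative---closing the diagram directly and ``expanding $\mathcal{C}(X,YZY)$ in a basis indexed by the fusion channels $Y\hookrightarrow P\otimes Q$''---is not correct as stated: a basis of $\mathcal{C}(X,YZY)$ is indexed by intermediate channels among $X,Y,Z$, and the $P,Q$ in the final formula are not such channels. Your sketch does not explain where the summation indices $P,Q$ actually arise, nor why the residual twists assemble into $(\theta_P/\theta_Q)^{2}$; the ``$S$-matrix from a Hopf link'' identity you cite does not by itself produce those factors. (You also omit the scalar $\theta_Y$ that the paper carries in front of the diagram, though this may be absorbed into your unspecified $R$.) The essential missing idea is the passage to the Drinfeld center via $\Psi$, which is what converts the operator into a genuine Ng--Schauenburg rotation and allows you to invoke the existing machinery rather than attempt an ad hoc closed-diagram evaluation.
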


\begin{proof}

The first item follows immediately from the fact that the isotropy group associated to the orbit $[X,Y]$ is trivial. 

For the second item recall from Proposition~\ref{prop:action} that $\mathbb{Z}/2\mathbb{Z}$ acts strictly (i.e. $F_{g}\circ F_{g}=id=F_{1}$ and the tensorator $\rho_{g,g}=id$) on the component $\mathcal{C}\boxtimes \mathcal{C}\subseteq \mathcal{D}$. Using the identity on $X\boxtimes X$ for $c_{g}$, the projective representation of $\mathbb{Z}/2\mathbb{Z}$ on $\mathcal{D}(X\boxtimes X, (Y\boxtimes Y)\otimes Z\boxtimes Z)\cong \mathcal{C}(X,Y\otimes Z)\otimes_{\mathbb{C}}\mathcal{C}(X,Y\otimes Z)$ is naturally a honest representation. If $\epsilon_{Y} \epsilon_{Z}=1$, we obtain the ordinary swap representation on the tensor product vector space $\mathcal{C}(X,Y\otimes Z)\otimes_{\mathbb{C}}\mathcal{C}(X,Y\otimes Z)$ while if $\epsilon_{Y} \epsilon_{Z}=-1$, we obtain this representation tensored with the sign representation. Thus using standard dimension formulas for $Sym$ and $Alt$, a case by case analysis yields the desired result.

For the third item we conveniently choose $c_{g}=\theta^{\frac{1}{2}}$. Then, using the tensorator for the $\mathbb{Z}/2\mathbb{Z}$-action from Proposition \ref{prop:action}, the associated (\emph{a-priori} projective) representation on the space $\mathcal{D}(\widehat{X}, (Y\boxtimes Y)\otimes \widehat{Z})=\mathcal{C}(X, YZY)$ is given by 

$$\pi(g)(f)=\epsilon_{Y}\epsilon_{Z}\theta_{Y}\frac{\theta^{\frac{1}{2}}_{Z}}{\theta^{\frac{1}{2}}_{X}} \raisebox{-.5\height}{ \includegraphics[scale = .4]{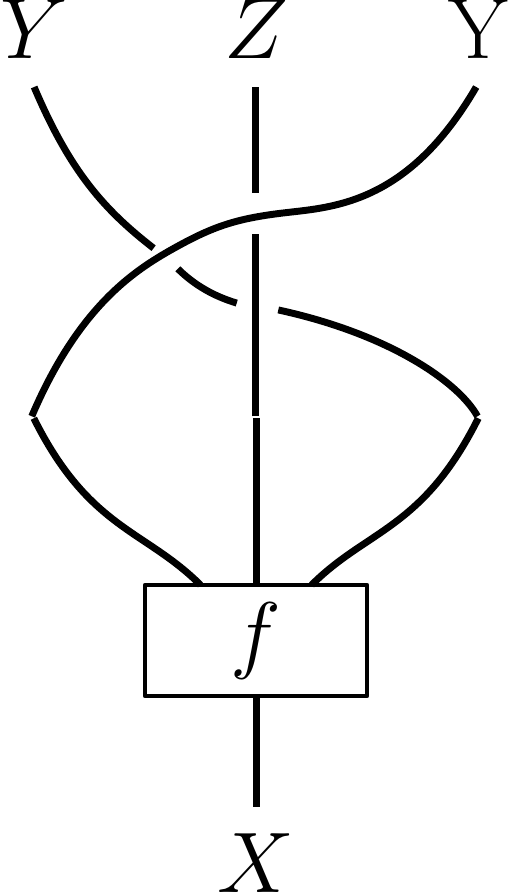}}.$$


We see that our convenient choice of $c_{g}$ has made this representation honest. Now we have an isomorphism $\Psi: \mathcal{C}(X, Y\otimes Z\otimes Y)\rightarrow \mathcal{C}(Z^{*}\otimes X, Y\otimes Y)$, represented via graphical calculus by
 \[ \raisebox{-.5\height}{ \includegraphics[scale = .4]{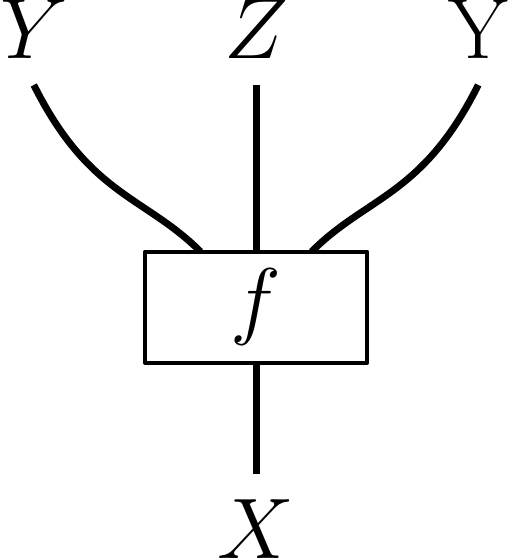}} \overset{\Psi}{\mapsto} \raisebox{-.5\height}{ \includegraphics[scale = .4]{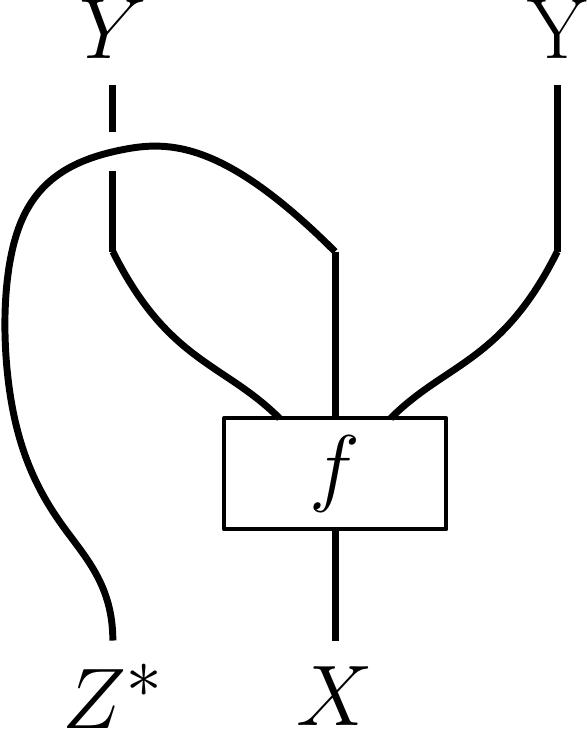}}, \] 
\[ \raisebox{-.5\height}{ \includegraphics[scale = .4]{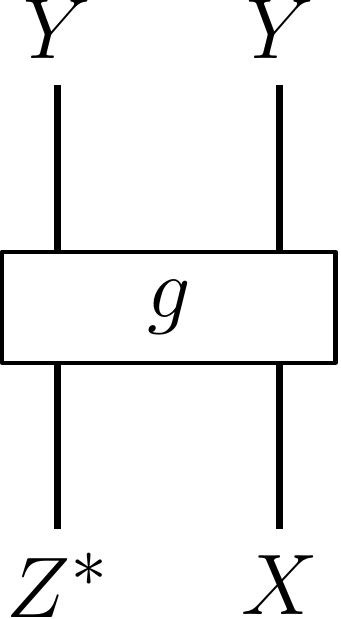}} \overset{\Psi^{-1}}{\mapsto} \raisebox{-.5\height}{ \includegraphics[scale = .4]{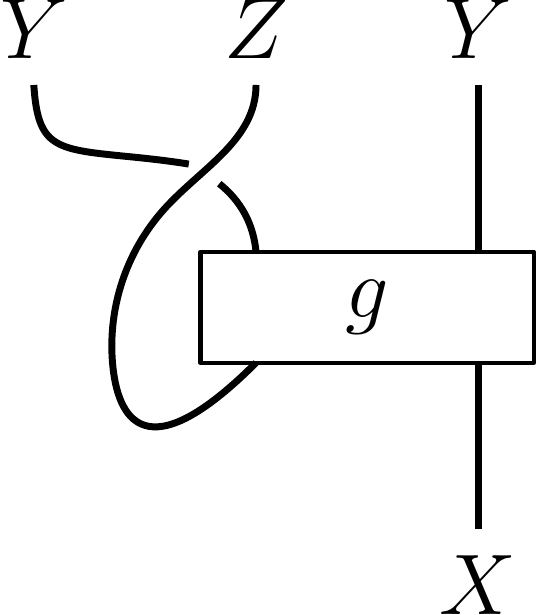}}. \] 

Equipping $Z^{*}\otimes W$ with the half-braiding by using the braiding on $Z^{*}$ (represented graphically by an over-braiding) and the inverse braiding on $W$ (represented graphically by an under-braiding), we can view $Z^{*}\otimes W$ as the simple object $Z^{*}\boxtimes X\in \mathcal{C}\boxtimes \mathcal{C}^{rev}\cong Z(\mathcal{C})$ \cite{Muger-Oxford}. We consider the generalized rotation operator $R:\mathcal{C}(Z^{*}\otimes X, Y\otimes Y)\rightarrow \mathcal{C}(Z^{*}\otimes X, Y\otimes Y)$ that is represented graphically by: 
\[ \raisebox{-.5\height}{ \includegraphics[scale = .4]{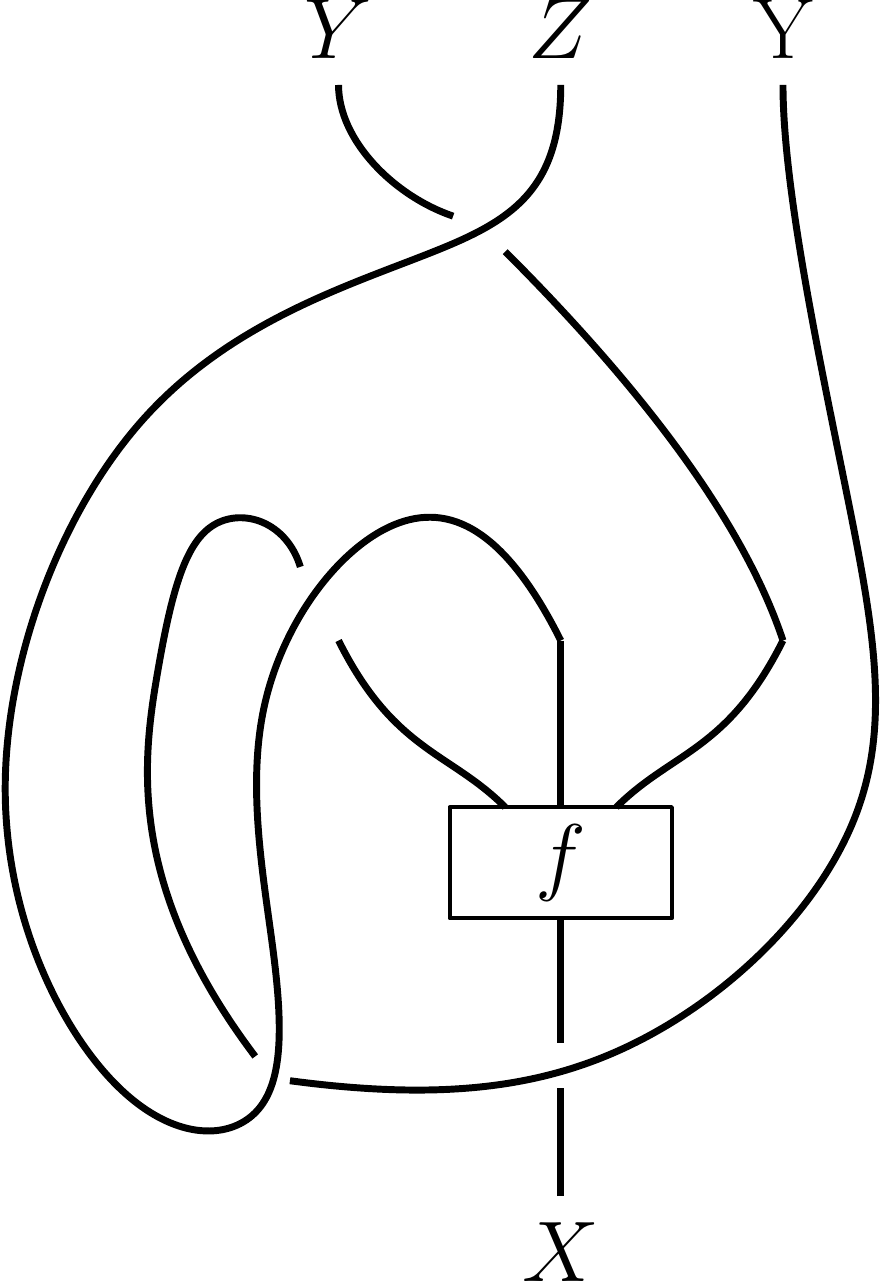}} = \raisebox{-.5\height}{ \includegraphics[scale = .4]{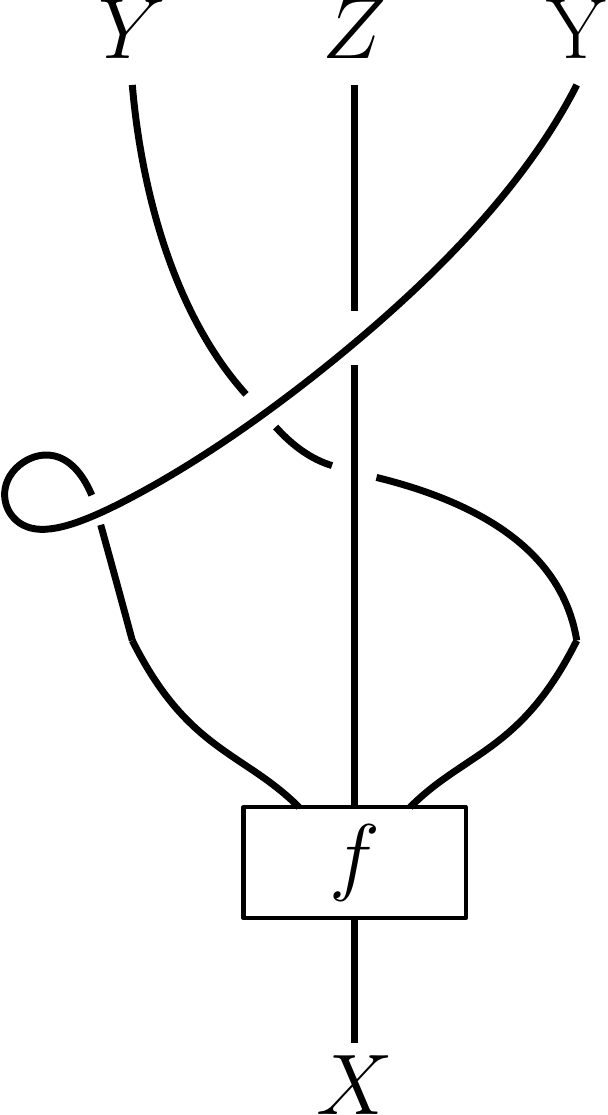}}, \] 
first introduced by Ng and Schauenburg \cite{MR2725181}. The above diagram shows that
$$\pi(g)=\epsilon_{Y}\epsilon_{Z}\theta^{\frac{1}{2}}_{Z} \theta^{-\frac{1}{2}}_{X} \Psi^{-1} \circ R \circ \Psi.$$

Thus to determine the multiplicities of the eigenvalues of $-1$ and $1$ respectively, it suffices to determine the multiplicities of the eigenvalues $\pm\epsilon_{Y}\epsilon_{Z}\theta^{-\frac{1}{2}}_{Z} \theta^{\frac{1}{2}}_{X}$. By Proposition 4.2 of \cite{MR3859960}, the result follows. For the reader unfamiliar with this result, it follows from application of Ng and Schauenburg's generalized Frobenius-Schur indicator formulas \cite{MR2725181} (see \cite{AnRanThesis} for a detailed exposition), combined with a finite Fourier transform.
\end{proof}

\subsection{Comparison with formulas from conformal field theory}

In \cite[Equations 4.36-4.38]{MR1606821} and \cite[Equations 40-44]{MR2116735}, the authors give the fusion rules for the modules of a $\mathbb{Z}/2\mathbb{Z}$ permutation orbifold rational conformal field theory. By \cite{MR2183964}, the category of modules of the orbifold is the $\mathbb{Z}/2\mathbb{Z}$ permutation gauging. As a consistency check, we show that our formulas and theirs are actually the same. To describe the formulas in these two references, we first must translate notations and conventions.

In \cite{MR1606821}, there are 3 types of simple objects: $(ij), (i \psi), \widehat{(i\psi)}$, where $i,j\in \text{Irr}(\cC)$, and $\psi\in \{0,1\}$. In our notation $(ij)$ would correspond to objects of the form $[X,Y]$, $(i \psi)$ corresponds to objects $[Y,Y]_{\epsilon_{Y}}$, and $\widehat{(i,\psi)}$ corresponds to the objects $\widehat{X}_{\epsilon_{X}}$, where in all the above cases $\epsilon=e^{\pi i \psi}$. With this dictionary, by inspection our formulas for the fusion rules agree with the formulas from equations 4.36 and 4.37 in \cite{MR1606821}. However, its not clear a-priori whether our item (3) in Theorem \ref{fusionequiv} agrees with their equation 4.38. Below, we will show this is indeed the case.

Let $\zeta=(\frac{p_{+}}{p_{-}})^{\frac{1}{6}}$, where $\displaystyle p_{\pm}=\sum_{X\in \text{Irr}(\cC)}d^{2}_{X}\theta^{\pm}_{X}$ are the Gauss sums. In the conformal field theory context, $\zeta$ is related to the central charge $c$ by $\zeta=e^{\frac{2\pi i c}{24}}$ (see \cite[Remark 3.1.20]{MR1797619}).

Then with $T:=\delta_{X,Y} \theta_{X}$ and defining $\hat{T}:=\frac{T}{\zeta}$, we get an honest representations of the modular group. In the references \cite{MR1606821}, \cite{MR2116735}, what they call $T$ is what we call $\hat{T}$ here, but $S$ has the same meaning here as there (recall we define $S$ immediately preceding the statement of Theorem \ref{fusionequiv}). If we let $C_{XY}=\delta_{X,Y^{*}}$ denote the charge conjugation matrix, we have the relations

$$(S\hat{T})^{3}=S^{2}=C,\ C^{2}=1,\ \hat{T}C=C \hat{T}.$$

 $S$ is a symmetric unitary. This (and the above relations) imply $(S^{-1})_{XY}=S_{X^{*}Y}=S_{XY^{*}}$. We also have the \textit{Verlinde formula} 

$$N^{Z}_{XY}=\sum_{R\in \text{Irr}(\cC)} \frac{S_{XR} S_{YR} S_{Z^{*}R}}{S_{1R}}$$

For all of the above, see \cite[Chapter 3]{MR1797619}. Now to describe the formulas for fusion rules from \cite{MR1606821}, define the matrix $P:=\hat{T}^{\frac{1}{2}} S \hat{T}^{2} S \hat{T}^{\frac{1}{2}}$. Then translating their equation 4.38 into our language, we have

$$\dim\left(\mathcal{D}^{\mathbb{Z}/2\mathbb{Z}}( \widehat{X}_{\epsilon_{X}},\ [Y,Y]_{\epsilon_Y}\otimes \widehat{Z}_{\epsilon_Z}) \right)=$$
$$\frac{1}{2}\left[ \sum_{R\in \text{Irr}(\cC)} \frac{S^{2}_{YR}S_{ZR}S_{X^{*}R}}{S^{2}_{1R}}+\epsilon_{X}\epsilon_{Y}\epsilon_{Z} \sum_{R\in \text{Irr}(\cC)} \frac{S_{YR}P_{ZR}P_{X^{*}R}}{S_{1R}}\right]$$

We claim the two terms on the right hand side of this equation match our two terms. Consider the first term. Recall we have the equation (which follows from the Verlinde formula)

$$\frac{S_{XR}S_{YR}}{S_{1R}}=\sum_{Z\in \text{Irr}(\cC)} N^{Z}_{XY}S_{ZR}$$

Applying this (along with the usual Verlinde formula) we get 

$$\sum_{R\in \text{Irr}(\cC)} \frac{S^{2}_{YR}S_{ZR}S_{X^{*}R}}{S^{2}_{1R}}=\sum_{R\in \text{Irr}(\cC)} \frac{N^{W}_{YY}S_{WR}S_{ZR}S_{X^{*}R}}{S_{1R}}=\sum_{W\in \text{Irr}(\cC)}N^{W}_{YY}N^{X}_{WZ}=N^{X}_{Y^{2}Z},$$

which is exactly the second term in our formula. Now, using the Verlinde formula and renormalizing, we can write the first term from item $(3)$ in Theorem \ref{fusionequiv} as

$$\frac{\theta^{\frac{1}{2}}_{Z}}{\theta^{\frac{1}{2}}_{X}} \epsilon_{X}\epsilon_{Y} \epsilon_{Z}\left(\displaystyle\sum_{P,Q\in Irr(\mathcal{C})} S_{Z^{*}P}S_{X^{*}Q}\left(\frac{\theta_{P}}{\theta_{Q}}\right)^{2} N^{Y}_{PQ}\right)$$
$$=\epsilon_{X}\epsilon_{Y}\epsilon_{Z} \sum_{R} \frac{(\hat{T}^{\frac{1}{2}}S^{-1}\hat{T}^{2}S)_{ZR}\ (\hat{T}^{-\frac{1}{2}}S^{-1} \hat{T}^{-2} S)_{XR}\ S^{-1}_{YR}}{S_{1R}}.$$

We note that because the $\cC^{\boxtimes 2}$ and its gauging are braided fusion categories, we can replace every object with its dual in the formula for fusion multiplicity since $N^{Z}_{XY}=N^{Z^{*}}_{X^{*}Y^{*}}$. This change makes the above formula into

$$=\epsilon_{X}\epsilon_{Y}\epsilon_{Z} \sum_{R} \frac{(\hat{T}^{\frac{1}{2}}S \hat{T}^{2}S)_{ZR}\ (\hat{T}^{-\frac{1}{2}}S \hat{T}^{-2} S)_{XR}\ S_{YR}}{S_{1R}}.$$

Note that by definition, we have $\hat{T}^{\frac{1}{2}}S \hat{T}^{2}S=P\hat{T}^{-\frac{1}{2}}$.  From the modular group relations, we have $\hat{T}S\hat{T}=S\hat{T}^{-1}S$, hence 

$$P\hat{T}^{\frac{1}{2}}=\hat{T}^{\frac{1}{2}} S \hat{T}^{2} S \hat{T}=\hat{T}^{-\frac{1}{2}} (\hat{T}S\hat{T}) (\hat{T} S \hat{T})=\hat{T}^{-\frac{1}{2}}(S\hat{T}^{-1} S)(S\hat{T}^{-1} S)=\hat{T}^{-\frac{1}{2}}SC\hat{T}^{-2}S=\hat{T}^{-\frac{1}{2}}S^{-1}\hat{T}^{-2}S.$$

Thus substituting into the above expression, the above expression becomes

$$=\epsilon_{X}\epsilon_{Y}\epsilon_{Z} \sum_{R} \frac{(P\hat{T}^{-\frac{1}{2}})_{ZR}\ (P\hat{T}^{\frac{1}{2}})_{X^{*}R}\ S_{YR}}{S_{1R}}$$
$$=\epsilon_{X}\epsilon_{Y}\epsilon_{Z} \sum_{R} \left(\frac{\theta_{R}}{\zeta}\right)^{\frac{1}{2}}\left(\frac{\theta_{R}}{\zeta}\right)^{-\frac{1}{2}}\frac{P_{ZR}\ P_{X^{*}R}\ S_{YR}}{S_{1R}}.$$
$$\epsilon_{X}\epsilon_{Y}\epsilon_{Z} \sum_{R} \frac{P_{ZR}\ P_{X^{*}R}\ S_{YR}}{S_{1R}},$$

as desired.

\section{Examples of permutation gauging}
To end this paper we explicitly compute the fusion rules for the $\mathbb{Z}/2\mathbb{Z}$ permutation gauging for several examples of modular tensor categories. As the rank of the resulting modular tensor categories grows quadratically, we restrict our attention to low rank categories. The full fusion rules of these categories are too big to include in this paper, hence we only include the graph representing fusion with the distinguished object $\widehat{\mathbf{1}}_+$. Our examples will consist of modular tensor categories constructed from level $k$ integrable representations of an affine Lie algebra $\widehat{\mathfrak{g}}$, which we denote $\cC(\mathfrak{g}, k)$. We direct the reader towards \cite{quan-primer} for details on these modular tensor categories, including explicit formulas for the modular data which we will need for our example computations. Attached to the arXiv submission of this paper are plain text files containing the full fusion ring for the third example, the $\mathbb{Z}/2\mathbb{Z}$ permutation gauging of the category $\cC(\mathfrak{g}_2, 3)$. 

\subsection*{The core of $\cC(\mathfrak{sl}_2, 8)$} Consider the modular tensor category $\cC(\mathfrak{sl}_2, 8)$. This category  contains the commutative algebra object $\mathbf{1} \oplus (7\Lambda_1)$. Let $\cC$ be the modular tensor category of dyslectic $\mathbf{1} \oplus (7\Lambda_1)$-modules in $\cC(\mathfrak{sl}_2, 8)$. This category $\cC$ is known as the core of $\cC(\mathfrak{sl}_2, 8)$. For details on the category of dyslectic modules, we point the reader to \cite[Section 3.12]{MR3039775}. We remark for those from an operator algebraic background, that $\cC$ can also be realised as the even part of the  $D_6$ subfactor standard invariant \cite{MR2559686}.

The fusion graph for the object $\widehat{\mathbf{1}}_+$ in the $\mathbb{Z}/2\mathbb{Z}$ permutation gauging of $\cC$ is as follows:
\begin{figure}[H]
\raisebox{-.5\height}{ \includegraphics[scale = 1]{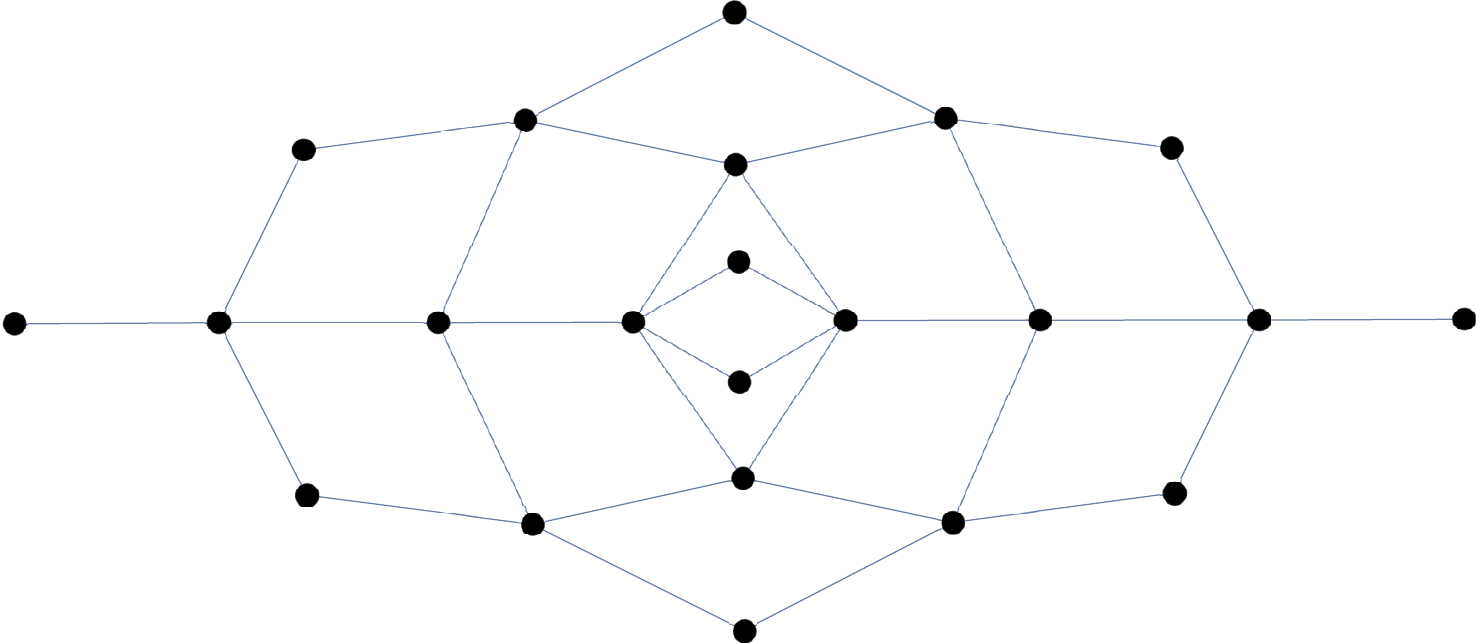}}
\caption{Fusion graph of the object $\widehat{\mathbf{1}}_+$ in the $\mathbb{Z}/2\mathbb{Z}$ permutation gauging of the core of $\cC(\mathfrak{sl}_2, 8)$.}
\end{figure}

This resulting category has the same fusion rules as the category of dyslectic $\mathbf{1} \oplus (4\Lambda_1)$-modules in $\cC(\mathfrak{so}_8,4)$. We observe similar behavior when we study the $\mathbb{Z}/2\mathbb{Z}$ permutation gauging of the category of dyslectic $\mathbf{1} \oplus (11\Lambda_1)$-modules in $\cC(\mathfrak{sl}_2, 12)$, which leads us to make the following conjecture. Recall $\cC^\text{rev}$ means the category monoidally equivalent to $\cC$, with the reverse braiding \cite[Definition 8.1.4]{MR3242743}.

\begin{conj}
Let $\cC$ be the modular tensor category of dyslectic $\mathbf{1} \oplus ((4N-1)\Lambda_1)$-modules in  $\cC(\mathfrak{sl}_2, 4N)$. Then the $\mathbb{Z}/2\mathbb{Z}$ permutation gauging of $\cC$ is braided equivalent to the category of dyslectic $\mathbf{1} \oplus (4\Lambda_1)$-modules in $\cC(\mathfrak{so}_{2N+4},4)^\text{rev}$.
\end{conj}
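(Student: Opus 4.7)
The plan is to verify the conjecture by matching both fusion rings and modular data on the two sides. For the left hand side one applies Theorem~\ref{fusionequiv} to the core category $\cC$. The first subtask is to extract the modular data of $\cC$ from that of $\cC(\mathfrak{sl}_2, 4N)$ via the standard algorithm for the dyslectic module category of a commutative algebra: list orbits of simples under tensoring by $((4N-1)\Lambda_1)$, resolve any fixed points, and use the Kirillov--Ostrik formulas for the induced $S$ and $T$ matrices. Feeding this into Theorem~\ref{fusionequiv} produces the fusion rules of the gauging in closed form. For the right hand side, the analogous procedure is carried out starting from the modular data of $\cC(\mathfrak{so}_{2N+4}, 4)$, which is given by Kac--Peterson formulas of type $D$.

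The conceptual input should come from level-rank duality together with the conformal embedding $\mathfrak{sl}_2 \oplus \mathfrak{sl}_2 \subset \mathfrak{so}_4 \subset \mathfrak{so}_{2N+4}$. Under this embedding, the permutation action on $\cC \boxtimes \cC$ should correspond to a natural $\mathbb{Z}/2\mathbb{Z}$ symmetry on the $\mathfrak{so}_{2N+4}$ side. This suggests an explicit bijection of simple objects: the off-diagonal classes $[X,Y]$ correspond to generic representations, the diagonal classes $[X,X]_{\pm}$ correspond to fixed-point resolutions of diagonal orbits, and the hatted classes $\widehat{X}_{\pm}$ correspond to the twisted sector, which on the $\mathfrak{so}$ side appears as the fixed-point-resolved simples of the $(4\Lambda_1)$-extension. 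One then checks that the $S$ and $T$ matrix entries match under this bijection, using the explicit form of the hatted twists involving $\theta_X^{1/2}$ that Theorem~\ref{fusionequiv} builds into the $T$-matrix via the cocycle choice of Proposition~\ref{prop:action}.

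The main obstacles are twofold. First, there are two $\mathbb{Z}/2\mathbb{Z}$-crossed braided extensions of $\cC \boxtimes \cC$ related by an $H^3(\mathbb{Z}/2\mathbb{Z}, \mathbb{C}^\times)$-twist; these have identical fusion rules but different twists on $\widehat{X}_{\pm}$, so one must identify which of the two extensions is the one being conjecturally matched, presumably the one compatible with the natural vertex operator algebra construction of the permutation orbifold. Second, agreement of fusion rules together with the $S$ and $T$ matrices does not in general imply braided equivalence of modular tensor categories, so to upgrade a numerical match to a braided equivalence one would likely need to exhibit both sides as the representation category of a common vertex operator algebra, either via Huang--Kong style orbifold VOA theory or via the conformal net approach of M{\"u}ger \cite{MR2183964}. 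The small cases $N=2,3$ mentioned in the text are amenable to direct computer verification of both fusion and modular data, but extending to all $N$ will need this VOA-theoretic framework together with nontrivial Weyl-character identities needed to reduce the sum in Theorem~\ref{fusionequiv}(3) to the expected fusion multiplicities on the $\mathfrak{so}_{2N+4}$ side.
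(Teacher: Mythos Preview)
The statement in question is labeled a \emph{Conjecture} in the paper, and the paper does not supply a proof. The only support given is a computer check of the fusion rules for $N=2$ (the core of $\cC(\mathfrak{sl}_2,8)$), together with the remark that ``similar behavior'' is observed for $N=3$; no argument for general $N$, and no verification of braided equivalence even in the checked cases, is offered. So there is no proof in the paper to compare your proposal against.

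Your document is itself explicitly a strategy rather than a proof, and as such it is a reasonable outline of what a serious attack would involve. You correctly flag the two structural obstacles: the $H^{3}(\mathbb{Z}/2\mathbb{Z},\mathbb{C}^{\times})$ ambiguity between the two extensions, and the fact that matching $S$ and $T$ does not by itself yield a braided equivalence. The suggestion to pass through a VOA or conformal-net realization to upgrade a numerical match is the natural route and is consistent with how such statements are typically settled.

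Two technical cautions. First, Theorem~\ref{fusionequiv} as stated requires $\cC$ to have no non-trivial invertible objects; before invoking it for all $N$ you must verify this for the dyslectic module category in question (for $N=2$ the paper notes $\cC\simeq \text{Fib}\boxtimes\text{Fib}$, so it holds there, but you should argue it in general). Second, the proposed conformal embedding $\mathfrak{sl}_2\oplus\mathfrak{sl}_2\subset\mathfrak{so}_4\subset\mathfrak{so}_{2N+4}$ does not, as written, track the levels correctly: you are starting from a level-$4N$ category on the $\mathfrak{sl}_2$ side after passing to dyslectic modules, and you need level $4$ on the $\mathfrak{so}_{2N+4}$ side, so the embedding chain and the associated branching rules need to be spelled out carefully before this can serve as the conceptual bridge. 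As it stands, that step is a heuristic rather than an argument.
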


It is interesting to note that while the initial modular category in this example is not prime (it decomposes as $\text{Fib}\boxtimes\text{Fib}$), the category constructed by the $\mathbb{Z}/2\mathbb{Z}$ permutation gauging is prime.

\subsection*{The adjoint subcategory of $\cC(\mathfrak{sl}_2, 5)$} Consider the modular tensor category $\cC(\mathfrak{sl}_2, 5)$. This category contains a rank 3 modular tensor subcategory, $\otimes$-generated by the simple object $(2\Lambda_1)$. Let $\cC$ denote this rank 3 modular tensor subcategory. This category $\cC$ is known as the adjoint subcategory of $\cC(\mathfrak{sl}_2, 5)$. We remark that $\cC$ can also be realised as the even part of the $A_{6}$ subfactor standard invariant. 

The fusion graph for the object $\widehat{\mathbf{1}}_+$ in the permutation gauging of $\cC$ is as follows:

\begin{figure}[H]
\raisebox{-.5\height}{ \includegraphics[scale = 1]{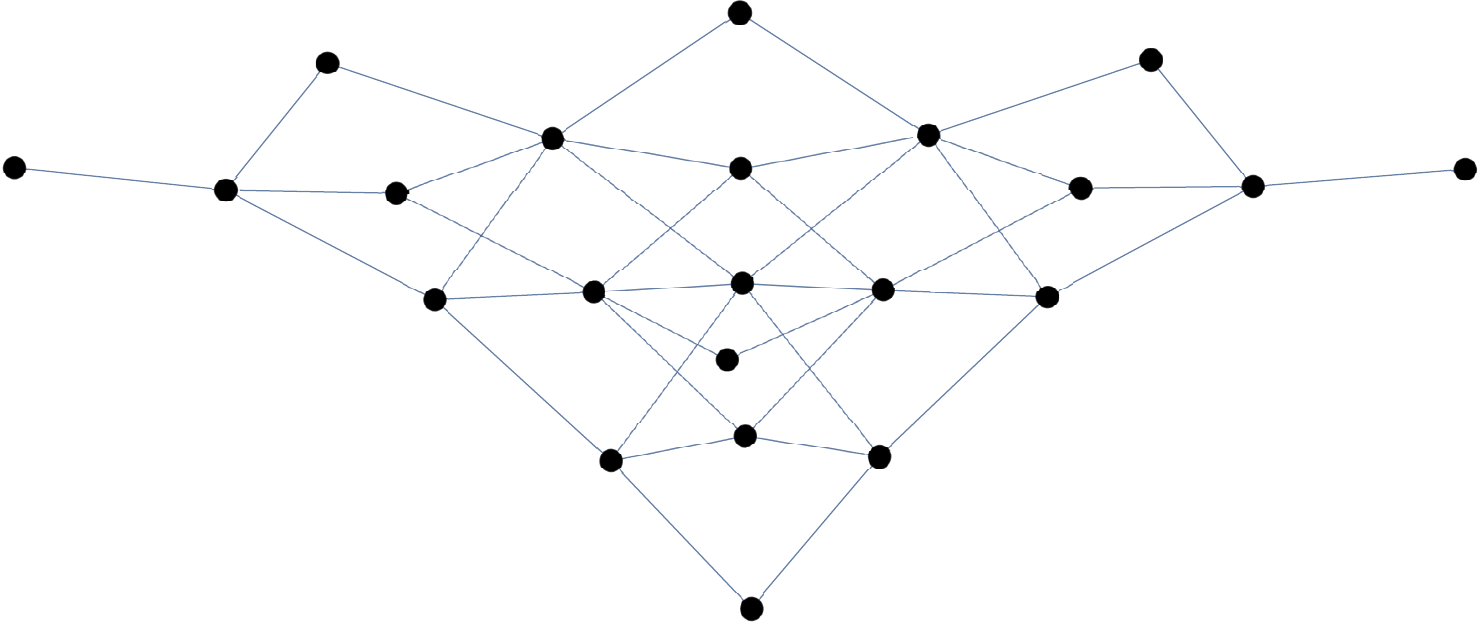}}
\caption{Fusion graph of the object $\widehat{\mathbf{1}}_+$ in the $\mathbb{Z}/2\mathbb{Z}$ permutation gauging of the adjoint subcategory of $\cC(\mathfrak{sl}_2, 5)$.}
\end{figure}

This modular tensor category has the same fusion rules as $\cC(\mathfrak{so}_5, 4)$. We notice similar behavior when we take the $\mathbb{Z}/2\mathbb{Z}$ permutation gauging of the modular tensor subcategory of $\cC(\mathfrak{sl}_2,2N+1)$ $\otimes$-generated by the object $(2\Lambda_1)$, for small $N$. Hence we are lead to make the following conjecture.

\begin{conj}
Let $\cC$ be the modular tensor subcategory of $\cC(\mathfrak{sl}_2,2N+1)$ $\otimes$-generated by the object $(2\Lambda_1)$, then the $\mathbb{Z}/2\mathbb{Z}$ permutation gauging of $\cC$ is braided equivalent to $\cC(\mathfrak{so}_{2N+1},4)^\text{rev}$.
\end{conj}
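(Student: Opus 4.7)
The plan is to prove the conjecture by matching fusion rules and modular data on the two sides, then invoking a rigidity or uniqueness result to promote the numerical match to a braided equivalence. Write $\cC$ for the adjoint subcategory of $\cC(\mathfrak{sl}_2,2N+1)$; its simple objects are labeled by even weights $2k\Lambda_1$ for $k=0,\ldots,N$, and its $S$ and $T$ matrices are restrictions of the well-known Kac--Peterson data for $\cC(\mathfrak{sl}_2,2N+1)$.

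First I would apply Theorem~\ref{fusionequiv} to write down the full fusion ring of the $\mathbb{Z}/2\mathbb{Z}$ permutation gauging of $\cC$ in closed form. Items (1) and (2) are immediate from the $\mathfrak{sl}_2$ fusion rules, so the work is concentrated in item (3). The crossed multiplicities there reduce to finite sums of the form
\[
\sum_{P,Q\in\operatorname{Irr}(\cC)} S_{Z^* P}\, S_{X^* Q}\, \bigl(\theta_P/\theta_Q\bigr)^2\, N^Y_{PQ},
\]
and for the specific $\mathfrak{sl}_2$ modular data these should collapse, via the Verlinde formula together with Chebyshev/sine orthogonality identities, to explicit integer multiplicities. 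Comparison with the Verlinde fusion rules of $\cC(\mathfrak{so}_{2N+1},4)$ (equally explicit from the type $B_N$ data) then produces a candidate isomorphism of fusion rings; rank counting on both sides provides a useful initial sanity check, and one can anchor the induction by verifying the small cases $N=1,2$ against the already-computed example of $\cC(\mathfrak{sl}_2,5)$ in the text.

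Second, I would compute the $(S,T)$ matrices of the gauging. The $T$ matrix is essentially read off from the description of simples in Section~5 once one fixes the square roots $\theta_X^{1/2}$ from Proposition~\ref{prop:action}. For the $S$ matrix I would apply the equivariantization formulas of \cite{MR3059899} to the explicit $\mathbb{Z}/2\mathbb{Z}$-action of Proposition~\ref{prop:action}; for the entries involving the $\widehat X_\pm$ sectors, the same braid-to-rotation and generalized Frobenius--Schur indicator technique used in the proof of Theorem~\ref{fusionequiv} should reduce the computation to a closed expression in the modular data of $\cC$. Matching this data against the modular data of $\cC(\mathfrak{so}_{2N+1},4)^{\text{rev}}$ (the reverse braiding complex-conjugating $S$ and inverting $T$) should then follow from the same identities used at the fusion level.

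Finally, to upgrade the match of modular data to a braided equivalence I would invoke a Kazhdan--Wenzl style uniqueness result for the type $B_N$ modular categories at level $4$, if available. In its absence, an alternative path is vertex-operator-algebraic: exhibit a conformal embedding or coset identification realizing the permutation orbifold of $(V_{\mathfrak{sl}_2,2N+1})^{\otimes 2}$ as $V_{\mathfrak{so}_{2N+1},4}$, from which the categorical statement follows by functoriality of $\operatorname{Rep}$ together with M\"uger's theorem \cite{MR2183964} identifying the representation category of a permutation orbifold with the permutation gauging. The main obstacle is precisely this last step: the fusion and modular data computation is combinatorially intricate but essentially routine, whereas producing a genuine braided equivalence uniformly in $N$ requires either an external rigidity theorem or an honest CFT-level identification, neither of which is automatic.
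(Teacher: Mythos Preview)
The statement you are attempting to prove is labeled a \emph{conjecture} in the paper; the authors offer no proof, only computational evidence for small $N$ obtained via Theorem~\ref{fusionequiv}. There is thus no proof in the paper to compare your proposal against.

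That said, your outline is a sensible program and you are candid about its principal gap. The fusion computation via Theorem~\ref{fusionequiv} is legitimate here (the adjoint subcategory of $\cC(\mathfrak{sl}_2,2N+1)$ indeed has no nontrivial invertibles, so the hypotheses are satisfied), and the modular data of a $\mathbb{Z}/2\mathbb{Z}$ permutation gauging can in principle be extracted from formulas such as those in \cite{MR1606821,MR2116735}. But the final step is a genuine obstruction, not a formality: no Kazhdan--Wenzl type rigidity theorem is known for type $B$ quantum group categories, and modular data alone does not determine a modular tensor category up to braided equivalence in general. The VOA route would require exhibiting an explicit conformal embedding or coset identification realizing the relevant permutation orbifold as the $\mathfrak{so}_{2N+1}$ level $4$ theory, which is likewise not established. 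So what you have written is a plausible plan of attack rather than a proof, and the obstacle you flag in your last sentence is precisely why the statement remains open.
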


\subsection*{The category $\cC(\mathfrak{g}_2,3)$}
Consider the modular tensor category $\cC(\mathfrak{g}_2,3)$. The fusion graph for the object $\widehat{\mathbf{1}}_+$ in the permutation gauging of $\cC(\mathfrak{g}_2,3)$ is as follows:

\begin{figure}[H]
\raisebox{-.5\height}{ \includegraphics[scale = 1]{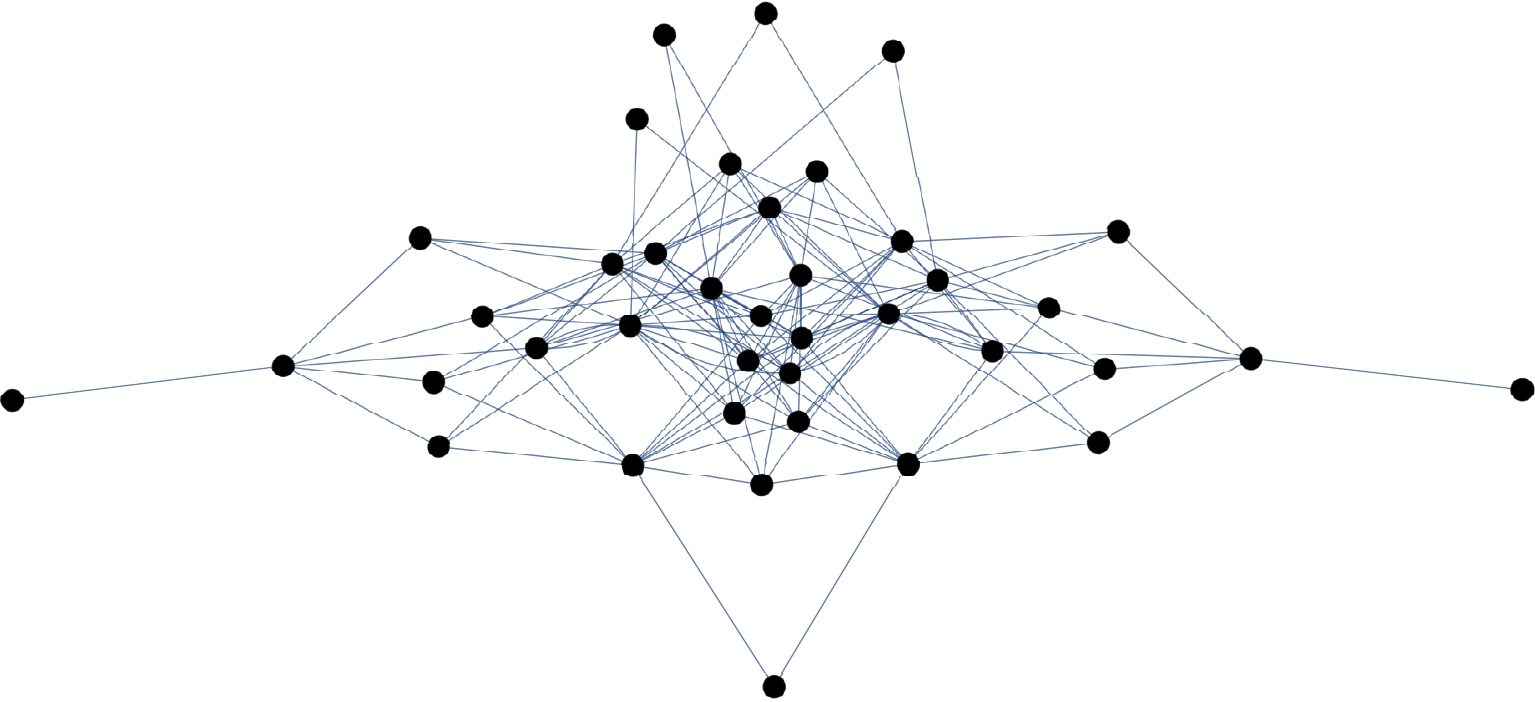}}
\caption{Fusion graph of the object $\widehat{\mathbf{1}}_+$ in the $\mathbb{Z}/2\mathbb{Z}$ permutation gauging of $\cC(\mathfrak{g}_2, 3)$.}
\end{figure}

The dimensions of the 39 simple objects in this category are:

\begin{align*}
\{ &1,1,\sqrt{21}+3,\sqrt{21}+3,\sqrt{21}+3,\sqrt{21}+5,\frac{3}{2} \left(\sqrt{21}+5\right),\frac{3}{2} \left(\sqrt{21}+5\right),\frac{3}{2} \left(\sqrt{21}+5\right) \\
&,\frac{3}{2} \left(\sqrt{21}+5\right), \frac{3}{2} \left(\sqrt{21}+5\right),\frac{3}{2} \left(\sqrt{21}+5\right), 3 \left(\sqrt{21}+5\right),3 \left(\sqrt{21}+5\right),3 \left(\sqrt{21}+5\right),\\
&\frac{7}{2} \left(\sqrt{21}+5\right),\frac{7}{2} \left(\sqrt{21}+5\right),\sqrt{\frac{21}{2} \left(\sqrt{21}+5\right)},\sqrt{\frac{21}{2} \left(\sqrt{21}+5\right)},\sqrt{21}+7,4 \sqrt{21}+18, \\
&4 \sqrt{21}+18,4 \sqrt{21}+18,5 \sqrt{21}+21,5 \sqrt{21}+21,5 \sqrt{21}+21,\frac{1}{2} \left(5 \sqrt{21}+23\right),\frac{1}{2} \left(5 \sqrt{21}+23\right),\\
&3 \sqrt{\frac{7}{2} \left(5 \sqrt{21}+23\right)},3 \sqrt{\frac{7}{2} \left(5 \sqrt{21}+23\right)},3 \sqrt{\frac{7}{2} \left(5 \sqrt{21}+23\right)},3 \sqrt{\frac{7}{2} \left(5 \sqrt{21}+23\right)},\\
&3 \sqrt{\frac{7}{2} \left(5 \sqrt{21}+23\right)},3 \sqrt{\frac{7}{2} \left(5 \sqrt{21}+23\right)},6 \sqrt{21}+28,\sqrt{21 \left(12 \sqrt{21}+55\right)},\sqrt{21 \left(12 \sqrt{21}+55\right)},\\
&7 \sqrt{  \frac{15}{2} \sqrt{21}+\frac{69}{2}},7 \sqrt{  \frac{15}{2} \sqrt{21}+\frac{69}{2}}\} .
\end{align*}

To the best of our knowledge, this modular tensor category is new.

\vspace{2em}

\textbf{Acknowledgements.} This work began during the third author's visit to the Australian National University, and she would like to thank the University and Scott Morrison for their hospitality. The authors would like to thank Marcel Bischoff, C\'esar Galindo, Scott Morrison, Andrew Schopieray, and Zhenghang Wang for many useful discussions.
Julia Plavnik was supported by the AMS-Simons travel grant. Cain Edie-Michell and Corey Jones were supported by Discovery Projects ‘Subfactors and symmetries’ DP140100732 and ‘Low dimensional categories’ DP160103479 from the Australian Research Council.

\bibliographystyle{aip}
\raggedright
\bibliography{bibliography}

\end{document}